\documentclass[12pt]{amsart}

\usepackage{amsmath, amsthm, amssymb, amsfonts, amscd, graphicx, endnotes, tikz, color}

\usepackage{mathtools}

\usepackage[margin=01.2in]{geometry}

\usetikzlibrary{arrows}

\def\FF{{\mathbb F}}

\def\QQ{{\mathbb Q}}

\def\QQ{{\mathbb Q}}
\def\RR{{\mathbb R}}

\def\ZZ{{\mathbb Z}}

\def\0{{\mathbf 0}}
\def\1{{\mathbf 1}}

\def\s{{\mathbf s}}
\def\t{{\mathbf t}}

\def\Acal{{\mathcal A}}
\def\Bcal{{\mathcal B}}
\def\Ccal{{\mathcal C}}

\def\Hcal{{\mathcal H}}

\def\Mcal{{\mathcal M}}

\def\diam{\mathrm{diam}}

\def\min{\mathrm{min}}

\def\Char{\mathrm{char}}

\def\sup{\mathrm{sup}}
\def\max{\mathrm{max}}

\theoremstyle{plain}

\newtheorem{thm}{Theorem}

\newtheorem{lem}[thm]{Lemma}

\theoremstyle{definition}

\title[A family of hyperbolic $p$-adic plane automorphisms]{Attractors associated to a family of \\ hyperbolic $p$-adic plane automorphisms}

\author{Clayton Petsche}

\address{Clayton Petsche; Department of Mathematics; Oregon State University; Corvallis OR 97331 U.S.A.}

\email{petschec@math.oregonstate.edu}

%\date{October 18, 2018}
%\thanks{...}
\keywords{$p$-adic or non-Archimedean dynamical systems, strange attractors, symbolic dynamics, physical measures, Hausdorff dimension}
\subjclass[2010]{37P20, 37D45, 37D20, 37B10, 37F35, 11S82}

%%%%%%%%%%%%%%%%%
%%%%%%%%%%%%%%%%%
%%%%%%%%%%%%%%%%%
%%%%%%%%%%%%%%%%%
%%%%%%%%%%%%%%%%%
%%%%%%%%%%%%%%%%%

\begin{document}

\begin{abstract}
We consider a certain two-parameter family of automorphisms of the affine plane over a complete, locally compact non-Archimedean field.  Each of these automorphisms admits a chaotic attractor on which it is topologically conjugate to a full two-sided shift map, and the attractor supports a unit Borel measure which describes the distribution of the forward orbit of Haar-almost all points in the basin of attraction.  We also compute the Hausdorff dimension of the attractor, which is non-integral. 
\end{abstract}

\maketitle

%%%%%%%%%%%%%%%%%
%%%%%%%%%%%%%%%%%
%%%%%%%%%%%%%%%%%
%%%%%%%%%%%%%%%%%
%%%%%%%%%%%%%%%%%
%%%%%%%%%%%%%%%%%

\section{Introduction}

Let $K$ be a complete and locally compact field with respect to a nontrivial, non-Archimedean absolute value $|\cdot|$.  Let 
$$
R=\{x\in K\mid |x|\leq1\}
$$ 
be the ring of integers in $K$, and let $\pi\in R$ be a uniformizing parameter; thus $|\pi|$ is maximal among all $x\in R$ with $|x|<1$, and
$$
\pi R=\{x\in R\mid |x|<1\}
$$
is the unique maximal ideal of $R$.  The most well-known examples are the fields $K=\QQ_p$ of $p$-adic numbers for prime numbers $p$, or more generally any finite extensions of $\QQ_p$.

Let $\FF=R/\pi R$ denote the residue field of $K$, and let $q=|\FF|$ denote its order.  We select the normalization of the absolute value $|\cdot|$ for which $|\pi|=1/q$; this choice is not strictly necessary but it simplifies many of our calculations.  

In \cite{MR3757169}, Allen-DeMark-Petsche studied the H\'enon map $H:K^2\to K^2$ given by $H(x,y) = (a+by-x^2,x)$ in the case $\Char(\FF)\neq2$.  For certain parameters $a,b\in K$, they found that $H$ is topologically conjugate on its filled Julia set to the shift map on the space of bisequences on two symbols, and for certain other choices of parameters, $H$ admits an attractor which appears in many cases to be uncountably infinite.  In \cite{MR3757169} $\S$~4.3, working with a particular family of examples over $\QQ_3$, they proved that the attractor is infinite and supports a measure which equidistributes the forward orbits of all points in the basin of attraction.  These attractors are thus similar in some ways to the strange attractors associated to certain real H\'enon maps; see \cite{MR0422932}, \cite{MR1087346}.  On the other hand, unlike the real H\'enon attractors, which are chaotic and have nonintegral Hausdorff dimension, the H\'enon maps studied in \cite{MR3757169} $\S$~4 are non-expanding and hence nonchaotic on their attractors, and the $3$-adic attractor described in \cite{MR3757169} $\S$~4.3 has Hausdorff dimension $1$; so the similarities to real strange attractors are limited.

In the present paper we study a family of hyperbolic non-Archimedean plane automorphisms which admit chaotic attractors.  Consider the family of automorphisms
\begin{equation}\label{HypFamily}
T:K^2\to K^2 \hskip1cm T(x,y)=(ay+b(x^q-x), x)
\end{equation}
where $a,b\in K$ satisfy 
\begin{equation}\label{abAssumptions}
0<|a|<1\text{ and }|b|=q.
\end{equation}
It follows from the assumptions $(\ref{abAssumptions})$ and the congruence $t^q\equiv t\pmod{\pi}$ for all $t\in R$ that $T(R^2)\subseteq R^2$.  Moreover, the restriction $T|_{R^2}:R^2\to R^2$ is nonsurjective; for example $T^{-1}(1,0)=(0,\frac{1}{a})\not\in R^2$ and hence $(1,0)\notin T(R^2)$.  It follows that the (compact) intersection
\begin{equation}\label{AttractorDef}
\Acal_T=\bigcap_{n\geq1}T^n(R^2)
\end{equation}
is an attractor strictly contained in $R^2$ and whose (open) basin of attraction  
\begin{equation}\label{BasinDef}
\Bcal_T=\bigcup_{n\geq1}T^{-n}(R^2)
\end{equation}
strictly contains $R^2$.  

One should expect $T$ to have hyperbolic and chaotic dynamics on $\Acal_T$, as can be seen by considering the characteristic polynomial of the Jacobian matrix of $T$ at a point $(x,y)\in K^2$, which is $p(\lambda)=\lambda^2-b(qx^{q-1}-1)\lambda-a$.  Inspection of the Newton polygon of this polynomial shows that, when $(x,y)\in R^2$, the Jacobian has eigenvalues $\lambda_\min,\lambda_\max\in K$ with $|\lambda_\min|=|a|/|b|<1$ and $|\lambda_\max|=|b|>1$.  

To describe our main results, we first review some basic ideas from symbolic dynamics.  Let $\FF^\ZZ$ be the set of bisequences
$$
(s_k)=(\dots s_{-3}s_{-2}s_{-1}.s_0s_1s_2s_3\dots)
$$
where each $s_k\in\FF$.  This is a compact topological space, and the {\em shift map} on $\FF^\ZZ$ is the homeomorphism $\sigma:\FF^\ZZ\to\FF^\ZZ$ defined by setting the $k$-th term of $\sigma((s_k))$ equal to $s_{k+1}$; that is
\begin{equation}\label{ShiftMap}
\sigma((\dots s_{-3}s_{-2}s_{-1}.s_0s_1s_2s_3\dots))=(\dots s_{-2}s_{-1}s_0.s_1s_2s_3s_4\dots).
\end{equation}

\begin{thm}\label{MainThmIntro}
The automorphism $T:K^2\to K^2$ restricts to a map $T:\Acal_T\to\Acal_T$ which is topologically conjugate to the shift map $\sigma:\FF^\ZZ\to\FF^\ZZ$.  More precisely, there exists a homeomorphism $\omega:\FF^\ZZ\to \Acal_T$ such that $\omega\circ\sigma=T\circ\omega$.
\end{thm}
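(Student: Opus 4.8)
The plan is to code each point of $\Acal_T$ by its \emph{itinerary} and to realize $\omega$ as the inverse of the resulting coding map, with the inverse produced by a contraction argument. First I would record the geometry of $g(x):=x^q-x$. Since $q=|\FF|$ is a power of $\Char(\FF)$ we have $|q|<1$, so $g'(x)=qx^{q-1}-1$ satisfies $|g'(x)|=1$ on all of $R$. For each $s\in\FF$ let $c_s\in R$ be the unique root of $g$ with $c_s\equiv s\pmod\pi$ (existence and uniqueness by Hensel's lemma), so that the residue disk $D_s=\{x\in R: x\equiv s\pmod\pi\}$ is the closed ball of radius $|\pi|=1/q$ about $c_s$. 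A non-Archimedean Taylor expansion together with $|g'|\equiv1$ shows $g$ is an isometry on each $D_s$; since $g(c_s)=0$, Hensel's lemma shows $g$ maps $D_s$ bijectively onto $\pi R$. Hence $x\mapsto b\,g(x)$ maps each $D_s$ bijectively onto $R$, dilating distances by $|b|=q$. I write $h_s:R\to D_s$ for the inverse of this map, so that $h_s$ contracts distances by the factor $1/q$.

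Next I would reduce the dynamics to a second-order recurrence. Because $\Acal_T$ is compact and $T$-invariant, the full bi-orbit of any $p\in\Acal_T$ lies in $R^2$; writing $T^n(p)=(x_n,y_n)$ one reads off $y_n=x_{n-1}$ from the second coordinate of $T$, and the first coordinate gives
\[
x_{n+1}=a\,x_{n-1}+b\,g(x_n),\qquad n\in\ZZ.
\]
I then define the coding map $\phi:\Acal_T\to\FF^\ZZ$ by $\phi(p)=(\overline{x_n})_n$, the bisequence of residues. This is continuous, since each coordinate $p\mapsto\overline{x_n}$ is the composition of the polynomial map $T^n$, a coordinate projection, and the locally constant reduction $R\to\FF$; and it satisfies $\phi\circ T=\sigma\circ\phi$ directly from the definition of $\sigma$.

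The core step is to invert $\phi$. Given a target $(s_n)\in\FF^\ZZ$, I would solve the recurrence for an orbit with that itinerary by rewriting it as $x_n=h_{s_n}(x_{n+1}-a\,x_{n-1})$ and introducing the operator $\Phi$ on the complete metric space $X=\prod_n D_{s_n}\subseteq R^\ZZ$ (closed in the sup metric, hence complete) via $(\Phi\x)_n=h_{s_n}(x_{n+1}-a\,x_{n-1})$ for $\x=(x_n)$. The argument $x_{n+1}-a\,x_{n-1}$ lies in $R$ because $|a|<1$, so $\Phi$ is well defined into $X$, and since each $h_{s_n}$ contracts by $1/q$ while $\max(1,|a|)=1$, one obtains $d(\Phi\x,\Phi\x')\le\tfrac1q\,d(\x,\x')$. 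The Banach fixed point theorem then yields a unique fixed point $\x=(x_n)$, which is precisely the unique two-sided orbit in $R^2$ with itinerary $(s_n)$; setting $\omega((s_n))=(x_0,x_{-1})$ gives a two-sided inverse of $\phi$ (uniqueness of the fixed point furnishes both injectivity and surjectivity of $\phi$). Since $\Acal_T$ is compact and $\FF^\ZZ$ is Hausdorff, the continuous bijection $\phi$ is automatically a homeomorphism, whence $\omega=\phi^{-1}$ is the desired homeomorphism, and $\phi\circ T=\sigma\circ\phi$ rearranges to $\omega\circ\sigma=T\circ\omega$.

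The main obstacle I anticipate lies in the first paragraph: verifying precisely that $b\,g$ carries each residue disk $D_s$ \emph{onto} all of $R$ as a $q$-fold dilation. This expansion is exactly what manufactures the horseshoe, and it is where the hypotheses $|b|=q$ and $|q|<1$ (forcing $|g'|\equiv1$) are essential. I would take care with the Hensel and Taylor estimates here, since it is the surjectivity of $g|_{D_s}$ onto $\pi R$ — not merely an isometric embedding — that guarantees surjectivity of $\omega$; everything downstream is a routine contraction-and-compactness argument.
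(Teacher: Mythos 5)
Your argument is correct, but it is organized quite differently from the paper's. The paper runs a Devaney--Nitecki horseshoe construction: Lemmas~\ref{VertTubesLem} and~\ref{HorTubesLem} show that $T^{-1}$ (resp.\ $T$) carries a $\delta$-neighborhood of a vertical (resp.\ horizontal) $(1/q)$-Lipschitz curve into a disjoint union of $q$ thinner such neighborhoods, one per residue class; iterating yields the curve families $V(f^{(s_0s_1s_2\dots)})$ and $H(g^{(\dots s_{-3}s_{-2}s_{-1})})$ of Lemmas~\ref{ForwardTrajectoryLemma} and~\ref{BackwardTrajectoryLemma}, and $\omega(\s)$ is defined as their unique intersection point via Lemma~\ref{LipschitzTubes}~(c). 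You instead collapse the dynamics to the scalar recurrence $x_{n+1}=ax_{n-1}+b(x_n^q-x_n)$ and produce the entire bi-infinite orbit with prescribed itinerary as the fixed point of a single $1/q$-contraction on $\prod_n D_{1/q}(s_n)$. The analytic engine is the same in both proofs --- everything rests on $\phi(t)=b(t^q-t)$ mapping each residue disc bijectively onto $R$ while expanding distances by $q$, which is your Hensel/Taylor computation and is the content of Lemma~\ref{MainPhiLemma} and the estimate $(\ref{PhiDerivative})$ in the paper --- but the architecture is genuinely different: one global contraction on a sequence space versus nested one-sided curve families that are then intersected. Your route is shorter and gets the homeomorphism property for free (a continuous bijection from the compact set $\Acal_T$ to the Hausdorff space $\FF^\ZZ$ is automatically a homeomorphism), where the paper must address continuity of $\omega$ and $\omega^{-1}$ separately. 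What the paper's longer route buys is the explicit quantitative structure --- the tubes $V_{\delta_n}$, $H_{\epsilon_n}$ and the sets $W_{x,y}=V(f^{(s_0s_1s_2\dots)})$ --- which is reused essentially in the proofs of Theorems~\ref{EquidistThmIntro} and~\ref{DimThmIntro}, so your approach would need supplementing if it were to replace Section~\ref{TopConjSect} wholesale. The one step you rightly flag as needing care, surjectivity of $t\mapsto t^q-t$ from $D_{1/q}(s)$ onto $\pi R$, does go through: for $w\in\pi R$ one has $|(c_s^q-c_s)-w|=|w|\leq 1/q<1=|qc_s^{q-1}-1|^2$, so Hensel's lemma gives a preimage in $D_{1/q}(s)$.
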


\begin{equation*}\label{TopConjDiagram}
\begin{CD}
\FF^\ZZ  @> \sigma >>   \FF^\ZZ \\ 
@V \omega VV                                    @VV \omega V \\ 
\Acal_T @> T >> \Acal_T
\end{CD}
\end{equation*}

\bigskip

We prove Theorem~\ref{MainThmIntro} in $\S$~\ref{TopConjSect}, using an argument related to the Smale horseshoe map (\cite{devaney:dynamicsbook} $\S$2.3) going back to the study by Devaney-Nitecki \cite{MR539548} of real H\'enon maps, and which was developed further in the non-Archimedean context by Allen-DeMark-Petsche \cite{MR3757169}.  Woodcock-Smart \cite{MR1678087} considered the automorphism of $R^2$ corresponding to the case $R=\ZZ_p$, $a=1$, $b=1/p$ in $(\ref{HypFamily})$; in this case $\Acal_T=\ZZ_p^2$ is no longer an attractor, but the topological conjugacy to the shift map still holds, and they use this map to create a pseudo-random number generator.  See also the related results of Benedetto-Briend-Perdry \cite{MR2394889} for (non-invertible) quadratic polynomial maps in one variable.

In $\S$~\ref{EquidistSect}, we use Theorem~\ref{MainThmIntro} to obtain a unit Borel measure $\mu_T$ supported on $\Acal_T$ by pushing forward the uniform Bernoulli measure on $\FF^\ZZ$ via the homeomorphism $\omega:\FF^\ZZ\to\Acal_T$.  It is then a simple matter to use the Birkhoff ergodic theorem to show that $\mu_T$-almost all points in $\Acal_T$ have the property that their forward $T$-orbits are $\mu_T$-equidistributed.  Going further, we adapt an argument using the idea of stable manifolds in smooth dynamics to obtain the following result.  Borrowing terminology from dynamical systems on real manifolds, let us say that a $T$-invariant unit Borel measure $\mu$ on $K^2$ is a {\em physical measure} with respect to $T$ if there exists a set $G\subseteq K^2$ of positive Haar measure such that every point in $G$ has $\mu$-equidistributed forward orbit. 

\begin{thm}\label{EquidistThmIntro}
There exists a subset $G\subseteq \Bcal_T$ of full Haar measure in $\Bcal_T$ such that, for all $(x,y)\in G$, the forward orbit $\{T^n(x,y)\}_{n=0}^{\infty}$ is $\mu_T$-equidistributed.  In particular, $\mu_T$ is a physical measure.
\end{thm}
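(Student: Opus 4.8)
The plan is to transport the ergodicity of the shift, already available on $\Acal_T$, outward along the contracting (stable) directions so as to fill up the basin $\Bcal_T$. I first reduce to the unit box $R^2$. Since the Jacobian determinant of $T$ is the constant $-a$, the map $T$ scales Haar measure by the factor $|a|$; in particular $T$ and all its iterates carry Haar-null sets to Haar-null sets and pull them back to Haar-null sets. Moreover, whether the forward orbit $\{T^n(x,y)\}$ is $\mu_T$-equidistributed depends only on its tail, hence is unchanged if $(x,y)$ is replaced by $T(x,y)$. Consequently it suffices to produce a Haar-full subset $G_0\subseteq R^2$ whose points have $\mu_T$-equidistributed forward orbits: the set $G=\bigcup_{n\geq1}T^{-n}(G_0)$ is then Haar-full in $\Bcal_T=\bigcup_{n\geq1}T^{-n}(R^2)$, since $\Bcal_T\setminus G$ is contained in the countable union $\bigcup_{n\geq1}T^{-n}(R^2\setminus G_0)$ of null sets, and every point of $G$ has an iterate lying in $G_0$.

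For the box I record forward itineraries. For $z\in R^2$ write $T^n z=(x_n,y_n)$ (all coordinates lie in $R$ because $T(R^2)\subseteq R^2$) and let $s_n(z)\in\FF$ be the image of $x_n$ under $R\to R/\pi R=\FF$. This defines a continuous map $\iota\colon R^2\to\Sigma^+$ into the one-sided shift space $\Sigma^+=\FF^{\{0,1,2,\dots\}}$, intertwining $T$ with the one-sided shift $\sigma^+\colon\Sigma^+\to\Sigma^+$. The first key step is a pushforward computation: $\iota_*(\mathrm{Haar}|_{R^2})$ equals the uniform Bernoulli measure $\beta$ on $\Sigma^+$. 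Concretely, I will show that every cylinder $\iota^{-1}[s_0\cdots s_{m-1}]$ has Haar measure $q^{-m}$. This reflects the expanding direction: because $|b|=q$ and $|a|<1$, once $s_0,\dots,s_n$ are fixed the symbol $s_{n+1}$ is a unit multiple of the next $\pi$-adic digit of $x_0$ (the contribution of the $ay$ term being of strictly smaller size), so passing from $s_n$ to $s_{n+1}$ peels off one fresh, uniformly distributed residue. Thus each added symbol cuts Haar measure by exactly $1/q$, giving $q^{-m}$ and hence $\iota_*\mathrm{Haar}=\beta$. Since $\sigma^+$ is ergodic (indeed mixing) for $\beta$, Birkhoff's ergodic theorem, applied to each of the countably many words $w$ and intersected, shows that $\beta$-almost every sequence is \emph{normal}: every finite word $w$ occurs with limiting frequency $\beta([w])=q^{-|w|}$. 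Set $G_0=\iota^{-1}(\{\text{normal sequences}\})$; by the pushforward identity $G_0$ is Haar-full in $R^2$.

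It remains to show that normality of $\iota(z)$ forces $\{T^n z\}$ to be $\mu_T$-equidistributed; this is where the stable-manifold idea enters. By density of locally constant functions in $C(R^2)$ it suffices to treat $f=\mathbf 1_C$ for a clopen set $C$ cut out by congruences modulo $\pi^m$. The orbit converges to the attractor: because the stable rate is $|\lambda_\min|=|a|/q<1$, the point $T^n z$ lies within $O((|a|/q)^n)$ of a ``shadow'' $p_n\in\Acal_T$, the error lying in the stable direction, and for $n$ large this forces $T^n z\equiv p_n\pmod{\pi^m}$, so $\mathbf 1_C(T^n z)=\mathbf 1_C(p_n)$. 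The residue $p_n\bmod\pi^m$ is an exact finite-window function of the bi-infinite coding $\omega^{-1}(p_n)=\mathbf t$, say of the symbols $t_{-L},\dots,t_{L'}$ about position $0$. The forward symbols $t_0,\dots,t_{L'}$ agree with $s_n,\dots,s_{n+L'}$ (forward shadowing only improves the error), while the backward symbols $t_{-1},\dots,t_{-L}$ agree with $s_{n-1},\dots,s_{n-L}$ once $n$ is large enough that the backward amplification factor $|\lambda_\min|^{-L}=(q/|a|)^{L}$ of the stable-direction error has not yet exceeded the resolution $\pi$. Hence for all large $n$ one has $\mathbf 1_C(T^n z)=\Phi(s_{n-L},\dots,s_{n+L'})$ for a fixed window function $\Phi\colon\FF^{L+L'+1}\to\{0,1\}$. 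The same $\Phi$ computes $\mu_T(C)=\int_{\FF^\ZZ}\mathbf 1_C(\omega(\mathbf t))\,d\beta(\mathbf t)=\sum_{w}\Phi(w)\,q^{-(L+L'+1)}$, its Bernoulli average. Since $\iota(z)$ is normal, the averages $\frac1N\sum_{n<N}\Phi(s_{n-L},\dots,s_{n+L'})$ converge to this Bernoulli average, and therefore $\frac1N\sum_{n<N}\mathbf 1_C(T^n z)\to\mu_T(C)$, which is the asserted equidistribution.

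The main obstacle is this last step: making the shadowing uniform enough to convert visit-frequencies along the single orbit of $z$ into word-frequencies of its \emph{one} forward itinerary. Two quantitative inputs must be balanced against one another — the stable contraction $|\lambda_\min|=|a|/q$, which drives $T^n z$ to $\Acal_T$ and lets the forward symbols of $z$ pin down the residue of the shadow, and the backward amplification $|\lambda_\min|^{-1}=q/|a|$ of the stable-direction shadowing error, which limits how many backward symbols of the shadow one may safely read off before the error overruns the resolution $\pi^m$. Verifying that, for each fixed $m$ (hence fixed window size $L,L'$), these estimates hold simultaneously for all sufficiently large $n$ — so that $\mathbf 1_C(T^n z)$ becomes exactly the finite-window function $\Phi$ of the forward itinerary — is the crux. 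The earlier pushforward identity $\iota_*\mathrm{Haar}=\beta$, although it likewise rests on a digit-by-digit analysis of $T$, is by comparison routine.
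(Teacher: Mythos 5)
Your argument is correct, and it reaches the conclusion by a genuinely different route than the paper. The paper applies the two-sided Birkhoff theorem to $(\Acal_T,\mu_T,T)$ to get a $\mu_T$-full set of generic points on the attractor, propagates genericity along the stable classes $W_{x,y}=V(f^{(s_0s_1\dots)})$ (any two points of such a class have asymptotic forward orbits, since both lie in horizontal tubes of width $\epsilon_{n-1}\to0$ after $n$ iterates), and then shows the saturated exceptional set is Haar-null by covering its intersection with $\Acal_T$ by forward cylinders and observing that the saturation of such a cylinder is a vertical tube $V_{\delta_M}(f^{(t_0t_1\dots)})$ of Haar measure $\delta_M=q^{-(M+1)}=\mu_T(U_j)$. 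You instead apply the one-sided Birkhoff theorem to the pushforward $\iota_*\mathrm{Haar}=\beta$ on $\FF^{\ZZ_{\geq0}}$ to get normality of Haar-a.e.\ forward itinerary, and then convert normality into equidistribution via a shadowing/finite-window argument. The two proofs pivot on the same measure identity --- Haar measure of a depth-$M$ forward-itinerary cylinder in $R^2$ equals $q^{-(M+1)}$ --- which you assert with a digit-peeling heuristic but which is rigorously exactly the paper's Lemma~\ref{ForwardTrajectoryLemma} combined with Lemma~\ref{LipschitzTubes}~{\bf(b)}: the cylinder $\iota^{-1}[s_0\cdots s_{M}]$ is the tube $V_{\delta_M}(f^{(s_0\dots)}_M)$, a union of $q^{M+1}$ balls of radius $q^{-(M+1)}$. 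Your shadowing step also goes through, and in fact more easily than you fear: the shadow of $T^nz$ can be taken to be $\omega(\dots * s_0\cdots s_{n-1}.s_ns_{n+1}\dots)$ for any backward extension, which shares \emph{exactly} the symbols $s_0,\dots,s_{n-1}$ and the full forward itinerary with $T^nz$ and lies within $\epsilon_{n-1}=|a|^{n-1}/q^{n}$ of it by Lemma~\ref{LipschitzTubes}~{\bf(d)}; there is no backward error amplification to control, so once $\max(\delta_{L'},\epsilon_{L-1})\leq q^{-m}$ and $n\geq L$ with $\epsilon_{n-1}\leq q^{-m}$, the value $\mathbf 1_C(T^nz)$ really is the fixed window function $\Phi(s_{n-L},\dots,s_{n+L'})$, and $\mu_T(C)$ is its Bernoulli average. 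What your route buys is a more transparent ``physical measure'' mechanism (normal itinerary $\Rightarrow$ equidistributed orbit) and the avoidance of the sub-$\sigma$-algebra covering lemma; what the paper's route buys is that all the quantitative dynamics is already packaged in the tube lemmas, so no separate shadowing estimate is needed. Both are valid.
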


Theorem~\ref{EquidistThmIntro} suggests that $\mu_T$ is a non-Archimedean analogue of the SRB measures (Sinai-Ruelle-Bowen) in smooth dynamics \cite{MR1933431}.  We point out that there are plenty of points in $\Bcal_T$ whose forward orbits are not $\mu_T$-equidistributed; for example, Theorem~\ref{MainThmIntro} implies that $T$-periodic points are dense in $\Acal_T$, and any of the uncountably many points in the stable manifold associated to such a point cannot have a $\mu_T$-equidistributed forward orbit.  See $\S$~\ref{EquidistSect} for more details on these assertions.

Finally, a close analysis of the proof of Theorem~\ref{MainThmIntro}, along with a regularity property of the measure $\mu_T$, allow us give a precise calculation of the Hausdorff dimension of the attractor $\Acal_T$.

\begin{thm}\label{DimThmIntro}
The attractor $\Acal_T$ has Hausdorff dimension 
$$
\dim\Acal_T=1+\frac{1}{1+\log_q(1/|a|)}.
$$  
\end{thm}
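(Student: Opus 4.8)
The plan is to exploit the conjugacy $\omega\colon\FF^\ZZ\to\Acal_T$ of Theorem~\ref{MainThmIntro} together with the measure $\mu_T$, and to obtain $\dim\Acal_T$ by proving that $\mu_T$ is \emph{regular} in the sense that $\mu_T(B(P,r))\asymp r^{s}$ for $P\in\Acal_T$ and small $r$, where $s=1+\tfrac{1}{1+\log_q(1/|a|)}$; the Hausdorff dimension then equals $s$ by the mass distribution principle. Write $\alpha=\log_q(1/|a|)$, so $|a|=q^{-\alpha}$ and the eigenvalue estimates in the introduction give a contracting rate $|\lambda_\min|=|a|/|b|=q^{-(1+\alpha)}$ and an expanding rate $|\lambda_\max|=|b|=q$. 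The geometric content is that $\Acal_T$ carries a local product structure: one (unstable) direction is \emph{full}, contributing Hausdorff dimension $1$, while the transverse (stable) direction is a Cantor set of dimension $1/(1+\alpha)$.

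First I would extract the needed metric data from the proof of Theorem~\ref{MainThmIntro}. Eliminating $y$ from $(\ref{HypFamily})$ shows that a point of $\Acal_T$ is encoded by a bi-infinite orbit $(x_k)_{k\in\ZZ}$ in $R$ obeying $x_{k+1}=ax_{k-1}+F(x_k)$, where $F(t)=b(t^q-t)$, and I would arrange the coding so that the past symbols $s_{-1},s_{-2},\dots$ pin down the unstable coordinate and the future symbols $s_0,s_1,\dots$ the stable coordinate. The essential analytic input is that $F\colon R\to R$ is exactly $q$-to-one and expands each residue disk by the factor $q$: indeed $|F'(t)|=|b|\,|qt^{q-1}-1|=q$, since $|q|<1$ forces $|qt^{q-1}-1|=1$. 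Consequently, fixing one additional future symbol refines the stable coordinate by the factor $|\lambda_\min|=q^{-(1+\alpha)}$ and splits each piece into $q$ children, while fixing one additional past symbol refines the unstable coordinate by the factor $q^{-1}$, again with $q$ children. I would record this as uniform two-sided bounds on the diameters of the images $\omega([s_{-m}\cdots s_{-1}.s_0\cdots s_{n-1}])$: unstable size $\asymp q^{-m}$ and stable size $\asymp q^{-\alpha}q^{-(n-1)(1+\alpha)}$.

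Next I would convert these estimates into the regularity of $\mu_T$. Since $\mu_T$ is the push-forward of the uniform Bernoulli measure, any cylinder fixing $m$ past and $n$ future symbols has mass $q^{-(m+n)}$. Given small $r>0$, matching an ultrametric ball $B(P,r)$ to the largest such cylinder it contains forces $m\asymp\log_q(1/r)$ from the unstable direction and $n\asymp\tfrac{1}{1+\alpha}\log_q(1/r)$ from the stable direction, whence
\[
\mu_T(B(P,r))\asymp q^{-(m+n)}\asymp r\cdot r^{1/(1+\alpha)}=r^{\,1+\frac{1}{1+\alpha}}.
\]
Because the ambient metric is ultrametric, any two balls are nested or disjoint and no Vitali/Besicovitch subtleties arise; the two-sided bound therefore yields both $\dim\Acal_T\ge s$ (from the upper bound on $\mu_T(B(P,r))$, via $\Hcal^s(\Acal_T)>0$) and $\dim\Acal_T\le s$ (from the lower bound, by covering $\Acal_T$ with balls of comparable mass), giving $\dim\Acal_T=1+\tfrac{1}{1+\alpha}$.

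I expect the main obstacle to be establishing the uniform self-similarity underlying the displayed estimate: one must show that the diameters of the cylinder images obey the claimed bounds with constants that do not drift across scales, and in particular disentangle the coupling between the two coordinate directions (the stable fibers are centered at the $F$-images of the unstable coordinate, which $F$ displaces by the factor $q$). This reduces to the exact expansion-by-$q$ and exactly-$q$-to-one properties of $F$ recorded above, applied along the recursion, together with the bookkeeping needed to pass between the discrete natural scales $q^{-k}$ and an arbitrary radius $r$.
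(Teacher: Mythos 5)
Your plan is essentially the paper's proof. The paper's Lemma~\ref{BernMeasLem} establishes exactly the upper regularity bound $\mu_T(B(P,r))\leq Cr^{s}$ with $s=1+\frac{1}{1+\log_q(1/|a|)}$ and deduces $\dim\Acal_T\geq s$ by the mass distribution principle, just as you propose. For the reverse inequality the paper does not prove the matching lower bound $\mu_T(B(P,r))\geq cr^{s}$, but instead covers $\Acal_T$ directly by the $q^n$ horizontal tubes $H_{\epsilon_n}(g)$ of Lemma~\ref{BackwardTrajectoryLemma}, each of which splits into $1/\epsilon_n$ balls of radius $\epsilon_n$ by Lemma~\ref{LipschitzTubes} {\bf (b)}, so that $\Hcal_{s,\epsilon_n}(\Acal_T)\leq(q^n/\epsilon_n)\epsilon_n^{s}=q^{1-s}$; your route through two-sided regularity is equally valid in the ultrametric setting (where the balls of a given radius partition $R^2$, so the counting argument you sketch does give $\Hcal_{s}(\Acal_T)<\infty$), but it amounts to the same cover in disguise. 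One detail in your write-up is reversed: in the paper's coding the \emph{future} symbols refine the tubes by the factor $q^{-1}$ per symbol (Lemma~\ref{VertTubesLem}: $T^{-1}(V_\delta(f))\cap R^2=\bigcup_s V_{(1/q)\delta}(f^s)$), producing the full, dimension-$1$ direction, while the \emph{past} symbols refine by the factor $|a|/q=|\lambda_\min|$ per symbol (Lemma~\ref{HorTubesLem}), producing the Cantor direction of dimension $1/(1+\log_q(1/|a|))$ --- the opposite of the assignment you state. Because the cylinder mass $q^{-(m+n)}$ is symmetric in the number $m$ of past and $n$ of future symbols fixed, this swap does not change the exponent, but the intermediate diameter estimates you record (unstable size $\asymp q^{-m}$, stable size $\asymp q^{-(n-1)(1+\log_q(1/|a|))}$) are interchanged and would need to be corrected before the argument is written out in full.
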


In particular, we point out that $\log_q(1/|a|)>0$ because of the assumption $(\ref{abAssumptions})$, and therefore $1<\dim\Acal_T<2$.  

It is interesting to note that one of our preliminary results, Lemma~\ref{HorTubesLem}, implies that $T$ maps $R^2$ bijectively onto a disjoint union of $q$ thin neighborhoods of graphs of functions $g:R\to R$ in $R^2$.  Iterating this lemma, we see that $\Acal_T$ is reminiscent of the well-known Smale-Williams solenoid attractor $\Acal=\cap_{n\geq0}T^n(X)$ associated to a map $T:X\to X$ on a solid torus $X$ in $\RR^3$ which embeds $X$ injectively into itself and wraps around itself $q\geq2$ times (\cite{MR0228014} $\S$~I.9).

We thank Rob Benedetto for helpful suggestions leading to simplifications of the proofs of the results of $\S$~\ref{TopConjSect}.

\section{Notation and preliminaries}

Recall that the absolute value on $K$ is normalized so that $|\pi|=1/q$, and therefore $|K^\times|=q^\ZZ$ is the value group of $K$.  We define the non-Archimedean norm $\|\cdot\|$ on $K^2$ by setting $\|(x_0,y_0)\|=\max(|x_0|,|y_0|)$, and for each $r\in q^\ZZ$, we define
\begin{equation*}
\begin{split}
D_r(x_0) & =\{x\in K\mid |x-x_0|\leq r\} \\
B_r(x_0,y_0) & =\{(x,y)\in K^2\mid \|(x,y)-(x_0,y_0)\|\leq r\},
\end{split}
\end{equation*}
the disc in $K$ centered at $x_0$ of radius $r$, and ball in $K^2$ centered at $(x_0,y_0)$ of radius $r$, respectively.

Let $f:R\to R$ be a function.  We say $f$ is {\em $C$-Lipschitz} for some $C>0$ if $|f(t_1)-f(t_2)|\leq C|t_1-t_2|$ for all $t_1,t_2\in R$.  Denote by 
\begin{equation*}
\begin{split}
H(f) & =\{(t,f(t))\in R^2\mid t\in R\} \\
V(f) & =\{(f(t),t)\in R^2\mid t\in R\} 
\end{split}
\end{equation*}
the {\em horizontal curve} and {\em vertical curve} in $R^2$ associated to $f$, and for each $0<\delta \leq1$, define $\delta$-neighborhoods of $H(f)$ and $V(f)$ by
\begin{equation*}
\begin{split}
H_\delta(f) & =\{(t,f(t)+\theta)\in R^2\mid t\in R, |\theta|\leq \delta\} \\ V_\delta(f) & =\{(f(t)+\theta,t)\in R^2\mid t\in R, |\theta|\leq \delta\}
\end{split}
\end{equation*}

\begin{lem}\label{LipschitzTubes}
Let $f,g:R\to R$ be $(1/q)$-Lipschitz functions and let $0<\delta\leq1$ with $\delta\in q^\ZZ$.  
\begin{itemize}
\item[{\bf (a)}]  If $r\leq\delta$ and $(x_0,y_0)\in H_\delta(f)$, then $B_r(x_0,y_0)\subseteq H_\delta(f)$; similarly for $V_\delta(f)$.
\item[{\bf (b)}]  $H_\delta(f)$ is a union of $1/\delta$ balls of radius $\delta$; similarly for $V_\delta(f)$.
\item[{\bf (c)}]  $V(f)\cap H(g)$ contains exactly one point.
\item[{\bf (d)}]  $V_\delta(f)\cap H_\delta(g)$ is a ball of radius $\delta$.
\end{itemize}
\end{lem}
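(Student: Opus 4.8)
The plan is to establish (a)--(d) in order, relying throughout on the strong triangle inequality and the $(1/q)$-Lipschitz hypothesis; only part (c) requires more than a direct estimate. I first record the membership criteria obtained by unwinding the definitions: $(x,y)\in H_\delta(f)$ iff $x\in R$ and $|y-f(x)|\leq\delta$, while $(x,y)\in V_\delta(f)$ iff $y\in R$ and $|x-f(y)|\leq\delta$. For part (a), given $(x_0,y_0)\in H_\delta(f)$ and $(x,y)\in B_r(x_0,y_0)$ with $r\leq\delta$, I would write $y-f(x)=(y-y_0)+(y_0-f(x_0))+(f(x_0)-f(x))$ and bound the three terms by $r\leq\delta$, by $\delta$, and by $(1/q)|x-x_0|\leq(1/q)\delta$ respectively; the ultrametric inequality then gives $|y-f(x)|\leq\delta$ (and $x\in R$ since $|x-x_0|\leq 1$), so $(x,y)\in H_\delta(f)$. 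The vertical case is identical after exchanging coordinates.

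For part (b), write $\delta=q^{-k}$, so that $1/\delta=q^k$ is the number of cosets of $\pi^k R$ in $R$, each a disc of radius $\delta$. I would show that the portion of $H_\delta(f)$ lying over one such disc $D_\delta(t_0)$ is exactly the ball $B_\delta(t_0,f(t_0))$: both inclusions are one-line computations using $|f(t)-f(t_0)|\leq(1/q)\delta\leq\delta$. Since distinct cosets have disjoint first coordinates, these $1/\delta$ balls are disjoint and exhaust $H_\delta(f)$, proving the claim and its vertical analogue.

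Part (c) is the crux. A point of $V(f)\cap H(g)$ has the form $(f(s),s)=(t,g(t))$, which forces $t=f(s)$ and $s=g(f(s))$; thus the intersection points correspond bijectively to the fixed points of $g\circ f\colon R\to R$. Because $f$ and $g$ are each $(1/q)$-Lipschitz, $g\circ f$ is $(1/q^2)$-Lipschitz, hence a strict contraction of the complete ultrametric space $R$. The contraction-mapping theorem then supplies a unique fixed point $s_0$, so that $(f(s_0),s_0)$ is the unique intersection point. This is the one step that uses completeness of $K$, and although standard it is the main conceptual obstacle.

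Finally, for part (d) I expect the intersection to be the ball centered at the unique point $(x_0,y_0)$ produced in part (c), characterized by $x_0=f(y_0)$ and $y_0=g(x_0)$. The inclusion $B_\delta(x_0,y_0)\subseteq V_\delta(f)\cap H_\delta(g)$ is immediate from part (a) applied at $(x_0,y_0)$ with $r=\delta$. For the reverse inclusion, I would set $A=|x-x_0|$ and $B=|y-y_0|$ for an arbitrary point of the intersection and derive the coupled estimates $A\leq\max(\delta,B/q)$ and $B\leq\max(\delta,A/q)$ from the membership inequalities together with the Lipschitz bounds. Assuming $A>\delta$ then forces $A\leq B/q$, hence $B\geq qA>\delta$, which in turn forces $B\leq A/q$; combining $B\geq qA$ with $B\leq A/q$ gives $q^2\leq 1$, a contradiction. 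Thus $A,B\leq\delta$ and $(x,y)\in B_\delta(x_0,y_0)$.
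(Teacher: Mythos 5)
Your proof is correct and follows essentially the same route as the paper: the three-term ultrametric estimate for (a), the partition of $R$ into $1/\delta$ discs for (b), the contraction-mapping argument applied to the composition of the two Lipschitz maps for (c), and the coupled inequalities $|x-x_0|\leq\max(\delta,|y-y_0|/q)$, $|y-y_0|\leq\max(\delta,|x-x_0|/q)$ for (d). The only cosmetic differences are that you use a single composition $g\circ f$ where the paper invokes both $f\circ g$ and $g\circ f$, and you finish (d) by contradiction where the paper evaluates the maximum directly.
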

\begin{proof}
{\bf (a)}  Since $(x_0,y_0)\in H_\delta(f)$, we have $|f(x_0)-y_0|\leq\delta$.  Given $(x_1,y_1)\in B_r(x_0,y_0)$, we have $|x_1-x_0|\leq r$ and $|y_1-y_0|\leq r$, so
\begin{equation*}
\begin{split}
|f(x_1)-y_1| & = |f(x_1)-f(x_0)+f(x_0)-y_0+y_0-y_1|  \\ 	
	& \leq \max(|f(x_1)-f(x_0)|,|f(x_0)-y_0|,|y_0-y_1|) \\
	& \leq \max((1/q)|x_1-x_0|,|f(x_0)-y_0|,|y_0-y_1|) \\
	& =\max(r,\delta)=\delta
\end{split}
\end{equation*}
and thus $(x_1,y_1)\in H_\delta(f)$.  

{\bf (b)}  Writing $\delta=1/q^r$, partition $R$ into $q^r$ discs $D_\delta(x_j)$ with centers $x_1,\dots,x_{q^r}$ and radius $\delta$.  We will show that
\begin{equation*}
H_\delta(f) = \bigcup_{1\leq j\leq q^r}B_\delta(x_j,f(x_j)).
\end{equation*}
That each $B_\delta(x_j,f(x_j))\subseteq H_\delta(f)$ follows from part {\bf (a)}.  Given a point $(x_0,y_0)\in H_\delta(f)$, we have $|y_0-f(x_0)|\leq\delta$.  Since the $D_\delta(x_j)$ partition $R$, we have $x_0\in D_\delta(x_{j_0})$ for some $1\leq j_0\leq q^r$.  Thus $|x_0-x_{j_0}|\leq\delta$, and 
\begin{equation*}
\begin{split}
|y_0-f(x_{j_0})| & = |y_0-f(x_0)+f(x_0)-f(x_{j_0}) |  \\ 	
	& \leq \max(|y_0-f(x_0)|,|f(x_0)-f(x_{j_0})) \\
	& \leq \max(|y_0-f(x_0)|,(1/q)|x_0-x_{j_0}|) \leq\delta 
\end{split}
\end{equation*}
verifying that $(x_0,y_0)\in B_\delta(x_{j_0},f(x_{j_0}))$. 

{\bf (c)}  This follows as in \cite{MR3757169} Lemma 23.  Briefly, the maps $f\circ g:R\to R$ and $g\circ f:R\to R$ are contractions and therefore have unique fixed points $x_0$ and $y_0$ in $R$, respectively.  Thus $y_0=g(x_0)$ and $x_0=f(y_0)$, and $V(f)\cap H(g)=\{(x_0,y_0)\}$.

{\bf (d)}  By part {\bf (c)}, $V(f)\cap H(g)=\{(x_0,y_0)\}$, and thus $f(y_0)=x_0$ and $g(x_0)=y_0$.  We will show that $V_\delta(f)\cap H_\delta(g)=B_\delta(x_0,y_0)$.  That $B_\delta(x_0,y_0)\subseteq V_\delta(f)\cap H_\delta(g)$ follows from part {\bf (a)}.  Converesly, if $(x,y)\in V_\delta(f)\cap H_\delta(g)$, then we have $|f(y)-x|\leq\delta$ and $|g(x)-y|\leq\delta$, and so 
\begin{equation*}
\begin{split}
|x_0-x| & = |f(y_0)-f(y)+f(y)-x| \leq \max((1/q)|y_0-y|,\delta) 
\end{split}
\end{equation*}
and 
\begin{equation*}
\begin{split}
|y_0-y| & = |g(x_0)-g(x)+g(x)-y| \leq \max((1/q)|x_0-x|,\delta) 
\end{split}
\end{equation*}
In particular,
\begin{equation*}
\begin{split}
|x_0-x| & \leq \max((1/q)^2|x_0-x|,(1/q)\delta,\delta)=\delta.
\end{split}
\end{equation*}
The evaluation of this maximum follows from the fact that the alternatives lead to the absurdities $|x_0-x| \leq (1/q)^2|x_0-x|$ and $(1/q)\delta>\delta$.  By a symmetrical argument, we also have $|y_0-y|\leq\delta$ and we have checked that $V_\delta(f)\cap H_\delta(g)\subseteq B_\delta(x_0,y_0)$.
\end{proof}

\section{The topological conjugacy to the shift map}\label{TopConjSect}

In this section we prove Theorem~\ref{MainThmIntro}.  Our approach follows a close parallel with the proof of Theorem 28 of Allen-DeMark-Petsche \cite{MR3757169}, and in particular, Lemmas~\ref{VertTubesLem}, \ref{HorTubesLem}, \ref{ForwardTrajectoryLemma}, and \ref{BackwardTrajectoryLemma} are suitably modified versions of Lemmas 24, 25, 26, and 27 of \cite{MR3757169}.  However, in the current paper the condition $T(R^2)\subseteq R^2$ gives $\Acal_T$ the structure of an attractor and leads to an asymmetry in the forward and backward dynamics of $T$ that is not present in Theorem 28 of \cite{MR3757169}.

Given $b\in K$ satisfying $|b|=q$, it is useful to define the function
\begin{equation}
\phi:K\to K \hskip1cm \phi(t)=b(t^q-t).
\end{equation}
We may then simplify the expressions for $T$ and $T^{-1}$ as 
\begin{equation}
T(x,y)=(ay+\phi(x), x) \hskip1cm \textstyle T^{-1}(x,y)=(y,\frac{1}{a}(x-\phi(y))).
\end{equation}

\begin{lem}\label{MainPhiLemma}
We have $\phi(R)\subseteq R$, and in particular $T(R^2)\subseteq R^2$.  If $t_1,t_2\in R$ satisfy $|t_1-t_2|\leq 1/q$, then $|\phi(t_1)-\phi(t_2)|=q|t_1-t_2|$. 
\end{lem}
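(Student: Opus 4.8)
The plan is to handle the two assertions separately, both resting on the congruence $t^q\equiv t\pmod{\pi}$ that holds for every $t\in R$ because the residue field $\FF$ has exactly $q$ elements. First I would deduce $\phi(R)\subseteq R$: for $t\in R$ the congruence gives $|t^q-t|\leq|\pi|=1/q$, and since $|b|=q$ this yields $|\phi(t)|=|b|\,|t^q-t|\leq 1$, so $\phi(t)\in R$. The ``in particular'' clause is then immediate, since for $(x,y)\in R^2$ we have $\phi(x)\in R$ and $|ay|=|a||y|<1$, so both coordinates of $T(x,y)=(ay+\phi(x),x)$ lie in $R$.

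For the second assertion I would start from the algebraic factorization
\[
\phi(t_1)-\phi(t_2)=b\bigl[(t_1^q-t_2^q)-(t_1-t_2)\bigr]=b(t_1-t_2)(S-1),\qquad S=\sum_{i=0}^{q-1}t_1^{\,q-1-i}t_2^{\,i}.
\]
Since $|b|=q$, the desired identity $|\phi(t_1)-\phi(t_2)|=q|t_1-t_2|$ reduces to the single claim that $S-1$ is a unit, that is, $|S-1|=1$.

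The crux, and the step I expect to be the main obstacle, is precisely verifying $|S-1|=1$; this is where the hypothesis $|t_1-t_2|\leq1/q$ does its work. Because $|t_1-t_2|\leq|\pi|$, the two elements have the same image $\bar t\in\FF$, so each of the $q$ terms of $S$ reduces to $\bar t^{\,q-1}$ and hence $\bar S=q\,\bar t^{\,q-1}$ in $\FF$. Now $q=|\FF|$ is a power of the residue characteristic $p=\Char(\FF)$, so $q=0$ in $\FF$; therefore $\bar S=0$, giving $\overline{S-1}=-1\neq0$ and thus $|S-1|=1$. This is the point at which the exponent $q$, the condition $|b|=q$, and the constraint $|t_1-t_2|\leq1/q$ conspire to make $\phi$ expand distances by exactly the factor $q$ at scales $\leq1/q$: any larger separation of $t_1,t_2$ would spoil the congruence $\bar t_1=\bar t_2$ and the clean evaluation of $\bar S$, so the hypothesis cannot be dropped.
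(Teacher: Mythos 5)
Your proposal is correct and follows essentially the same route as the paper: both parts rest on the congruence $t^q\equiv t\pmod{\pi}$, and the key step is the observation that the sum $t_1^{q-1}+t_1^{q-2}t_2+\dots+t_2^{q-1}$ reduces to $q\,\bar t^{\,q-1}=0$ in $\FF$ when $t_1\equiv t_2\pmod{\pi}$, so that the cofactor $S-1$ is a unit. The only cosmetic difference is that you justify $q\equiv0\pmod{\pi}$ via the residue characteristic while the paper cites the order of the additive group of $\FF$; both are valid.
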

\begin{proof}
Since the multiplicative group $\FF^\times$ has order $q-1$, we have the congruence $t^{q-1}\equiv1\pmod{\pi}$ for all $t\not\equiv0\pmod{\pi}$ in $R$, and hence $t^{q}\equiv t\pmod{\pi}$ for all $t\in R$.  It follows that $|t^p-t|\leq1/q$, and hence $|\phi(t)|=|b(t^q-t)|\leq1$ for all $t\in R$, verifying that $\phi(R)\subseteq R$.

Since $\FF=R/\pi R$ has order $q$, we have 
\begin{equation}\label{qIsZeroModpi}
q\equiv0\pmod{\pi}.
\end{equation}  
Given $t_1,t_2\in R$ with $|t_1-t_2|\leq 1/q$ we have $t_1\equiv t_2\pmod{\pi}$ and therefore
\begin{equation*}
t_1^{q-1}+t_1^{q-2}t_2+t_1^{q-3}t_2^2+\dots+t_2^{q-1} \equiv qt_1^{q-1}\equiv 0\pmod{\pi},
\end{equation*}
and we conclude
\begin{equation*}
\begin{split}
|\phi(t_1)-\phi(t_2)| & = |b||t_1^q-t_2^q-(t_1-t_2)| \\
	& = q|t_1^{q-1}+t_1^{q-2}t_2+t_1^{q-3}t_2^2+\dots+t_2^{q-1}-1||t_1-t_2| \\
	& = q|t_1-t_2|.
\end{split}
\end{equation*}
\end{proof}

In a slight abuse of notation, we use $\FF$ to denote both the residue field $R/\pi R$ as well as a complete set of coset representatives in $R$ for this quotient.  Since $|\pi|=1/q$, the congruence classes modulo $\pi$ in $R$ are the same as discs of radius $1/q$ in $R$, and so for each $s\in\FF$ the disc $D_{1/q}(s)=\{x\in R \mid |x-s|\leq1/q \}$ does not depend on the choice of coset representative.

The following lemma states that the inverse image under $T$ of a $\delta$-neighborhood of a vertical curve meets $R^2$ at a union of thinner neighborhoods of vertical curves.  Then, Lemma~\ref{HorTubesLem} gives an analogous result but for the $T$-image of neighborhoods of horizontal curves.  

\begin{lem}\label{VertTubesLem}
Let $f:R\to R$ be a $(1/q)$-Lipschitz function and let $0<\delta \leq1$.  Then for each $s\in\FF$ there exists a $(1/q)$-Lipschitz function $f^s:R\to R$ such that $f^s(t)\in D_{1/q}(s)$ for all $t\in R$ and
\begin{equation}\label{VertTubesLemIdentity}
T^{-1}(V_\delta (f))\cap R^2=\bigcup_{s\in\FF}V_{(1/q)\delta}(f^s).
\end{equation}
\end{lem}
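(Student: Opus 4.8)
The plan is to compute $T^{-1}(V_\delta(f))\cap R^2$ explicitly and then recognize it as the asserted union of thin vertical tubes. Using $T(x,y)=(ay+\phi(x),x)$, and the fact that a point $(u,v)$ lies in $V_\delta(f)$ precisely when $v\in R$ and $|u-f(v)|\leq\delta$, one sees that $(x,y)\in R^2$ satisfies $T(x,y)\in V_\delta(f)$ if and only if $|ay+\phi(x)-f(x)|\leq\delta$. It is therefore natural to set $\psi(x)=\phi(x)-f(x)$, so that the preimage is described by the single inequality $|\psi(x)+ay|\leq\delta$ over $(x,y)\in R^2$. Since $f$ is $(1/q)$-Lipschitz and Lemma~\ref{MainPhiLemma} gives $|\phi(t_1)-\phi(t_2)|=q|t_1-t_2|$ whenever $|t_1-t_2|\leq1/q$, the ultrametric inequality (with $q>1/q$) yields $|\psi(t_1)-\psi(t_2)|=q|t_1-t_2|$ for all $t_1,t_2$ lying in a common residue disc $D_{1/q}(s)$.

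The heart of the argument is to show that for each $s\in\FF$ the restriction $\psi|_{D_{1/q}(s)}\colon D_{1/q}(s)\to R$ is a bijection. That the image lands in $R$ is clear, since $\phi(R)\subseteq R$ and $f(R)\subseteq R$. Injectivity is immediate from the scaling relation above. For surjectivity I would write a general point of $D_{1/q}(s)$ as $s+\pi u$ with $u\in R$; the scaling relation then shows that $u\mapsto\psi(s+\pi u)$ is an isometry of $R$ into $R$, and an isometric self-embedding of the compact space $R$ is automatically onto. Hence $\psi$ maps $D_{1/q}(s)$ bijectively onto $R$. Because $|a|<1$ forces $-ay\in R$ for every $y\in R$, there is then a unique point $f^s(y)\in D_{1/q}(s)$ with $\psi(f^s(y))=-ay$; this defines the function $f^s\colon R\to R$ with $f^s(t)\in D_{1/q}(s)$ for all $t$.

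It remains to identify the tubes. Applying the scaling relation to $f^s(y_1),f^s(y_2)\in D_{1/q}(s)$ gives $q\,|f^s(y_1)-f^s(y_2)|=|\psi(f^s(y_1))-\psi(f^s(y_2))|=|a|\,|y_1-y_2|$, so $f^s$ is $(|a|/q)$-Lipschitz and in particular $(1/q)$-Lipschitz. Fixing $s$ and $y$, and using $\psi(f^s(y))=-ay$ together with the scaling relation on $D_{1/q}(s)$, one checks that a point $(x,y)$ with $x\in D_{1/q}(s)$ satisfies $|\psi(x)+ay|\leq\delta$ if and only if $|x-f^s(y)|\leq(1/q)\delta$, which is exactly the membership condition for $V_{(1/q)\delta}(f^s)$; here the inequality $(1/q)\delta\leq1/q$ guarantees that this tube stays inside the residue disc $D_{1/q}(s)$, so distinct values of $s$ give disjoint tubes. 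Since $R$ is the disjoint union of the discs $D_{1/q}(s)$ as $s$ ranges over $\FF$, partitioning the solution set of $|\psi(x)+ay|\leq\delta$ according to the residue disc containing $x$ yields the identity $(\ref{VertTubesLemIdentity})$.

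The main obstacle is the surjectivity half of the bijection claim, since the scaling property alone gives only an isometric embedding; the point is that the source $D_{1/q}(s)$ is itself a full disc of the correct radius, which lets one reduce to an isometry $R\to R$ and invoke compactness. Everything else is a direct manipulation of the ultrametric inequality.
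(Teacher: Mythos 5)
Your argument is correct, and it reaches the same defining equation $ay+\phi(f^s(y))-f(f^s(y))=0$ as the paper, but by a genuinely different mechanism at the key existence--uniqueness step. The paper defines, for each fixed $t$ and $s$, the map $F_{t}^s(x)=x+\frac{at+\phi(x)-f(x)}{b}$ on $D_{1/q}(s)$, verifies it is a $(1/q)$-Lipschitz self-map using the estimate $(\ref{PhiDerivative})$, and invokes the contraction mapping principle to produce the fixed point $f^s(t)$. You instead work directly with $\psi=\phi-f$, observe via Lemma~\ref{MainPhiLemma} and the ultrametric equality case that $\psi$ scales distances by exactly $q$ on each residue disc (the same underlying estimate as $(\ref{PhiDerivative})$, repackaged), and obtain $f^s(y)$ as the unique $\psi$-preimage of $-ay$ in $D_{1/q}(s)$, with surjectivity of $\psi|_{D_{1/q}(s)}$ onto $R$ coming from the standard fact that an isometric self-embedding of a compact metric space is onto. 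The paper's route is constructive (iterate the contraction) and is the template reused verbatim in Lemma~\ref{HorTubesLem}; your route trades that for a soft compactness argument but yields two small dividends for free: the exact scaling relation makes the identification of the tube $V_{(1/q)\delta}(f^s)$ a one-line equivalence $|\psi(x)+ay|\leq\delta\iff|x-f^s(y)|\leq(1/q)\delta$ (the paper's ``elementary calculations'' behind $(\ref{TubesIdentity})$), and it shows $f^s$ is in fact $(|a|/q)$-Lipschitz, sharper than the $(1/q)$-Lipschitz bound required.
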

\begin{proof}
Fix $t\in R$ and $s\in\FF$, and define a $(1/q)$-Lipschitz function
\begin{equation*}
\begin{split}
& F_{t}^s:D_{1/q}(s)\to D_{1/q}(s) \\
& F_{t}^s(x) = x+\frac{at+\phi(x)-f(x)}{b}.
\end{split}
\end{equation*}
If $x\in D_{1/q}(s)$, then $|F_{t}^s(x)-s|=|x-s+\frac{at+\phi(x)-f(x)}{b}|\leq 1/q$, verifying that $F_{t}^s(D_{1/q}(s))\subseteq D_{1/q}(s)$.  To check the Lipschitz condition, note that for distinct $x_1,x_2\in D_{1/q}(s)$, using $(\ref{qIsZeroModpi})$ we have
\begin{equation*}
\begin{split}
\frac{1}{b}\frac{\phi(x_1)-\phi(x_2)}{x_1-x_2} & =\frac{(x_1^q-x_1)-(x_2^q-x_2)}{x_1-x_2}  \\
	& =x_1^{q-1}+x_1^{q-2}x_2+\dots+x_2^{q-1}-1 \\
	& \equiv qs^{q-1}-1 \equiv-1\pmod{\pi R}
\end{split}
\end{equation*}
and therefore since $|b|=q$ we have
\begin{equation}\label{PhiDerivative}
\begin{split}
\bigg|b+\frac{\phi(x_1)-\phi(x_2)}{x_1-x_2}\bigg| & \leq1.
\end{split}
\end{equation}
We conclude, using the Lipschitz assumption on $f$, that
$$
\bigg|\frac{F_{t}^s(x_1)-F_{t}^s(x_2)}{x_1-x_2}\bigg|=\frac{1}{|b|}\bigg|b+\frac{\phi(x_1)-\phi(x_2)}{x_1-x_2}-\frac{f(x_1)-f(x_2)}{x_1-x_2}\bigg|\leq\frac{1}{|b|}=\frac{1}{q},
$$
and thus $F_{t}^s$ is $(1/q)$-Lipschitz.

Since $F_{t}^s$ is contracting, it has a unique fixed point in $D_{1/q}(s)$; call this point $f^s(t)$.  We have constructed the function $f^s:R\to D_{1/q}(s)$, and this function satisfies 
\begin{equation}\label{FixedPointIdentity}
at+\phi(f^s(t))-f(f^s(t))=0.
\end{equation}
It is straightforward to check that $f^s$ is Lipschitz using Lemma~\ref{MainPhiLemma} and the Lipschitz assumption on $f$.

Elementary calculations using $(\ref{FixedPointIdentity})$ show that for each $s\in \FF$, we have
\begin{equation}\label{TubesIdentity}
\{(x,y)\in R^2\mid x\in D_{1/q}(s),|ay+\phi(x)-f(x)|\leq\delta \} =V_{(1/q)\delta}(f^s),
\end{equation}
and it follows from $(\ref{TubesIdentity})$ that
\begin{equation*}
\begin{split}
T^{-1}(V_\delta (f))\cap R^2 & = \{(x,y)\in R^2\mid T(x,y)\in V_\delta (f)\} \\
	& = \{(x,y)\in R^2\mid |ay+\phi(x)-f(x)|\leq\delta \} \\
	& = \bigcup_{s\in\FF}\{(x,y)\in R^2\mid x\in D_{1/q}(s),|ay+\phi(x)-f(x)|\leq\delta \} \\
	& = \bigcup_{s\in\FF}V_{(1/q)\delta}(f^s).
\end{split}
\end{equation*}
\end{proof}

\begin{lem}\label{HorTubesLem}
Let $g:R\to R$ be a $(1/q)$-Lipschitz function and let $0<\epsilon\leq1$.  Then for each $s\in\FF$ there exists a $(1/q)$-Lipschitz function $g^s:R \to R$ such that $g^s(t)\in D_{1/q}(s)$ for all $t\in  R$ and
\begin{equation*}
T(H_\epsilon(g))=\bigcup_{s\in\FF}H_{(|a|/q)\epsilon}(g^s).
\end{equation*}
\end{lem}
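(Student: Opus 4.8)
The plan is to mirror the structure of Lemma~\ref{VertTubesLem}, but now running the dynamics forward and exploiting the fact that $\phi$ is expanding. The starting point is a direct computation of the image: a typical point of $H_\epsilon(g)$ has the form $(t,g(t)+\theta)$ with $t\in R$ and $|\theta|\leq\epsilon$, and applying $T(x,y)=(ay+\phi(x),x)$ gives
\begin{equation*}
T(t,g(t)+\theta)=(\phi(t)+ag(t)+a\theta,\ t).
\end{equation*}
Thus the second coordinate of the image is simply $y=t$, ranging over all of $R$, while the first coordinate is $\phi(y)+ag(y)+a\theta$. Re-reading this as a condition on $(x,y)\in R^2$, I would first establish the clean description
\begin{equation*}
T(H_\epsilon(g))=\{(x,y)\in R^2\mid |x-\phi(y)-ag(y)|\leq|a|\epsilon\},
\end{equation*}
both inclusions being straightforward, the nontrivial one following by solving $\theta=(x-\phi(y)-ag(y))/a$ and checking $|\theta|\leq\epsilon$.

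The conceptual heart of the lemma is that this image is a \emph{horizontal} tube, which forces us to invert the relation $u=\phi(y)+ag(y)$, expressing $y$ as a function of the first coordinate. Since $\phi$ expands distances by exactly $q$ on each residue disc $D_{1/q}(s)$ (Lemma~\ref{MainPhiLemma}) while $ag$ is strongly contracting ($|a|<1$), the map $y\mapsto\phi(y)+ag(y)$ is $q$-to-one, with one branch on each of the $q$ residue discs; this is the source of the index $s\in\FF$. Paralleling the fixed-point construction of Lemma~\ref{VertTubesLem}, for each $u\in R$ and $s\in\FF$ I would define
\begin{equation*}
G_u^s:D_{1/q}(s)\to D_{1/q}(s),\qquad G_u^s(y)=y+\frac{\phi(y)+ag(y)-u}{b},
\end{equation*}
verify using $|b|=q$ and the congruence $\tfrac1b\tfrac{\phi(y_1)-\phi(y_2)}{y_1-y_2}\equiv-1\pmod{\pi}$ (exactly as in the derivation of $(\ref{PhiDerivative})$) that $G_u^s$ maps $D_{1/q}(s)$ into itself and is a $(1/q)$-contraction, and take $g^s(u)$ to be its unique fixed point. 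By construction $g^s(u)\in D_{1/q}(s)$ and $\phi(g^s(u))+ag(g^s(u))=u$; differencing this defining identity at two points $u_1,u_2$ and applying Lemma~\ref{MainPhiLemma} to the dominant $\phi$-term shows $q|g^s(u_1)-g^s(u_2)|=|u_1-u_2|$, so $g^s$ is $(1/q)$-Lipschitz.

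With $g^s$ in hand, I would partition $R^2$ according to the residue class of the second coordinate $y$ and check that the $s$-piece of $T(H_\epsilon(g))$ is exactly $H_{(|a|/q)\epsilon}(g^s)$. The main obstacle, and the place where the exact expansion factor in Lemma~\ref{MainPhiLemma} is essential, is pinning down the tube radius $(|a|/q)\epsilon$. Given $(x,y)$ in the $s$-piece, I would subtract the defining identity $\phi(g^s(x))+ag(g^s(x))=x$ from $\phi(y)+ag(y)$ and use that $|\phi(y)-\phi(g^s(x))|=q|y-g^s(x)|$ strictly dominates the $a$-term; the bound $|x-\phi(y)-ag(y)|\leq|a|\epsilon$ then converts into $|y-g^s(x)|\leq(|a|/q)\epsilon$, and conversely. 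In words, the thickness $\epsilon$ of the incoming tube is scaled by $|a|$ (from the factor $a$ multiplying $\theta$) and then divided by $q$ (from inverting the $q$-fold expansion of $\phi$), yielding the contraction factor $|a|/q$ that distinguishes this forward lemma from its backward counterpart and ultimately produces the attractor.
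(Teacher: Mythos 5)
Your proposal is correct and follows essentially the same route as the paper: the same contraction $G^s_u(y)=y+\frac{\phi(y)+ag(y)-u}{b}$ on each residue disc $D_{1/q}(s)$, the same fixed-point definition of $g^s$ via the identity $\phi(g^s(u))+ag(g^s(u))=u$, and the same identification of the image as $\{(x,y)\in R^2\mid |x-\phi(y)-ag(y)|\leq|a|\epsilon\}$ decomposed over residue classes of $y$. The only differences are presentational: you compute $T(H_\epsilon(g))$ by pushing points forward rather than pulling back with $T^{-1}$, and you spell out the Lipschitz verification for $g^s$ and the exact radius conversion that the paper labels ``elementary calculations.''
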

\begin{proof}
Fix $t\in R$ and $s\in\FF$, and define a $(1/q)$-Lipschitz function
\begin{equation*}
\begin{split}
& G_{t}^s:D_{1/q}(s)\to D_{1/q}(s) \\
& G_{t}^s(y) = y-\frac{t-\phi(y)-ag(y)}{b}.
\end{split}
\end{equation*}

%If $y\in D_{1/q}(s)$, then $|G_{t}^s(y)-s|=|y-s-\frac{t-\phi(y)-ag(y)}{b}|\leq 1/q$, verifying that $G_{t}^s(D_{1/q}(s))\subseteq D_{1/q}(s)$.  To check the Lipschitz condition, we again appeal to $(\ref{PhiDerivative})$ and the Lipschitz assumption on $g$, and we note that for distinct $y_1,y_2\in D_{1/q}(s)$ we have
%$$
%\bigg|\frac{G_{t}^s(y_1)-G_{t}^s(y_2)}{y_1-y_2}\bigg|=\frac{1}{|b|}\bigg|b+\frac{\phi(y_1)-\phi(y_2)}{y_1-y_2}+a\frac{g(y_1)-g(y_2)}{y_1-y_2}\bigg|\leq\frac{1}{|b|}=\frac{1}{q}.
%$$

That $G_{t}^s(D_{1/q}(s))\subseteq D_{1/q}(s)$ and that $G_{t}^s$ is $(1/q)$-Lipschitz follows from arguments similar to those in the proof of Lemma~\ref{VertTubesLem}.  Since $G_{t}^s$ is contracting, it has a unique fixed point in $D_{1/q}(s)$; call this point $g^s(t)$.  We have constructed the function $g^s:R\to D_{1/q}(s)$, and it satisfies 
\begin{equation}\label{HorFixedPointIdentity}
t-\phi(g^s(t))-ag(g^s(t))=0.
\end{equation}
It is straightforward to check that $g^s$ is Lipschitz using Lemma~\ref{MainPhiLemma} and the Lipschitz assumption on $g$.

Elementary calculations using $(\ref{HorFixedPointIdentity})$ show that for each $s\in \FF$, we have
\begin{equation}\label{HorTubesIdentity}
\{(x,y)\in R^2\mid y\in D_{1/q}(s),|x-\phi(y)-ag(y)|\leq|a|\epsilon\} =H_{(|a|/q)\epsilon}(g^s).
\end{equation}
Using $(\ref{HorTubesIdentity})$ and the fact that $T(H_\epsilon(g))\subseteq T(R^2)\subseteq R^2$ we conclude the desired identity
\begin{equation*}
\begin{split}
T(H_\epsilon(g)) & = \{(x,y)\in R^2\mid T^{-1}(x,y)\in H_\epsilon(g)\} \\
	& = \{(x,y)\in R^2\mid |x-\phi(y)-ag(y)|\leq|a|\epsilon\} \\
	& = \bigcup_{s\in\FF}\{(x,y)\in R^2\mid y\in D_{1/q}(s),|x-\phi(y)-ag(y)|\leq|a|\epsilon\} \\
	& = \bigcup_{s\in\FF}H_{(|a|/q)\epsilon}(g^s).
\end{split}
\end{equation*}

\end{proof}

Observe that the collection of $q$ sets
\begin{equation}\label{VerticalPartition}
\{D_{1/q}(s)\times R\mid s\in \FF\}
\end{equation}
forms a partition of $R^2$.  Since $T(R^2)\subseteq R^2$, the forward $T$-orbit of each point of $R^2$ follows a trajectory through the $q$ sets in the partition $(\ref{VerticalPartition})$.  The following lemma shows that all possible trajectories occur, and the set of points with a given trajectory is a vertical curve in $R^2$.  Then, Lemma~\ref{BackwardTrajectoryLemma} gives an analogous statement for backward orbits of points in $\Acal_T$ in terms of horizontal curves.

\begin{lem}\label{ForwardTrajectoryLemma}
There exists a family of $(1/q)$-Lipschitz functions $f^{(s_0s_1s_2\dots)}:R\to R$, indexed by the set of all sequences $(s_0s_1s_2\dots)$, where each $s_k\in\FF$, such that
\begin{equation}\label{CompleteVerticalTrajectorySetsAreCurves}
V(f^{(s_0s_1s_2\dots)})=\{(x,y)\in R^2\mid T^k(x,y)\in D_{1/q}(s_k)\times R\text{ for all }k\geq0\}.
\end{equation}
Moreover, 
\begin{equation}\label{R2Partition}
R^2=\bigcup_{s_0,s_1,s_2,\dots\in\FF}V(f^{(s_0s_1s_2\dots)}).
\end{equation}
\end{lem}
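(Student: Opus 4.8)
The plan is to prove the characterization (\ref{CompleteVerticalTrajectorySetsAreCurves}) by realizing each $f^{(s_0s_1s_2\dots)}$ as a uniform limit of \emph{finite-truncation} functions obtained by iterating Lemma~\ref{VertTubesLem}, and then to deduce (\ref{R2Partition}) from the fact that every point of $R^2$ has a well-defined forward trajectory. The first step is to establish, by induction on the length $n$, that for each finite word $(s_0s_1\dots s_{n-1})$ there is a $(1/q)$-Lipschitz function $f^{(s_0\dots s_{n-1})}:R\to R$ with values in $D_{1/q}(s_0)$ satisfying
\[
\{(x,y)\in R^2\mid T^k(x,y)\in D_{1/q}(s_k)\times R,\ 0\le k\le n-1\}=V_{(1/q)^n}(f^{(s_0\dots s_{n-1})}).
\]
The base case $n=1$ is immediate: the partition block $D_{1/q}(s_0)\times R$ is exactly the tube $V_{1/q}(c_{s_0})$ associated to the constant function $c_{s_0}\equiv s_0$, so one takes $f^{(s_0)}=c_{s_0}$.

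For the inductive step I would peel off the \emph{first} symbol and use that $T$ shifts trajectories. Since $T(R^2)\subseteq R^2$, a point $(x,y)\in R^2$ satisfies the length-$(n+1)$ condition for $(s_0\dots s_n)$ precisely when $x\in D_{1/q}(s_0)$ and $T(x,y)$ satisfies the length-$n$ condition for the tail $(s_1\dots s_n)$, i.e.
\[
\{(x,y)\in R^2\mid T^k(x,y)\in D_{1/q}(s_k)\times R,\ 0\le k\le n\}=(D_{1/q}(s_0)\times R)\cap\bigl(T^{-1}(V_{(1/q)^n}(f^{(s_1\dots s_n)}))\cap R^2\bigr).
\]
Applying Lemma~\ref{VertTubesLem} with $f=f^{(s_1\dots s_n)}$ and $\delta=(1/q)^n$ turns the second factor into $\bigcup_{s\in\FF}V_{(1/q)^{n+1}}((f^{(s_1\dots s_n)})^s)$, and because the $s$-th piece lies in the strip $D_{1/q}(s)\times R$, intersecting with $D_{1/q}(s_0)\times R$ keeps only the term $s=s_0$. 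One then sets $f^{(s_0s_1\dots s_n)}:=(f^{(s_1\dots s_n)})^{s_0}$, which Lemma~\ref{VertTubesLem} guarantees is $(1/q)$-Lipschitz with values in $D_{1/q}(s_0)$, completing the induction.

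The passage to the infinite word is the step I expect to require the most care. Writing $W_n:=V_{(1/q)^n}(f^{(s_0\dots s_{n-1})})$, imposing more trajectory conditions shrinks the set, so the $W_n$ are nested decreasingly; the containment $W_{n+1}\subseteq W_n$ unwinds (by taking $\theta=0$) to the uniform estimate $|f^{(s_0\dots s_n)}(t)-f^{(s_0\dots s_{n-1})}(t)|\le(1/q)^n$ for all $t$. Thus the truncations are uniformly Cauchy, and since $R$ is complete they converge uniformly to a function $f^{(s_0s_1s_2\dots)}$, which inherits the $(1/q)$-Lipschitz bound and the value constraint in $D_{1/q}(s_0)$. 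The delicate point is to verify that the graph $V(f^{(s_0s_1s_2\dots)})$ equals $\bigcap_n W_n$ and not merely a subset of it. The inclusion $V(f^{(s_0s_1s_2\dots)})\subseteq\bigcap_n W_n$ follows from $\|f^{(s_0s_1s_2\dots)}-f^{(s_0\dots s_{n-1})}\|_\infty\le(1/q)^n$, obtained from the Cauchy bound and the ultrametric inequality; the reverse inclusion follows because any $(x,y)\in\bigcap_n W_n$ satisfies $|x-f^{(s_0\dots s_{n-1})}(y)|\le(1/q)^n$ for all $n$, forcing $x=f^{(s_0s_1s_2\dots)}(y)$ in the limit. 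As $\bigcap_n W_n$ is by construction the left-hand side of (\ref{CompleteVerticalTrajectorySetsAreCurves}), this establishes the characterization.

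Finally, for (\ref{R2Partition}) I would observe that each $(x,y)\in R^2$ determines a symbol sequence: since $T(R^2)\subseteq R^2$, every iterate $T^k(x,y)$ lands in $R^2$ and hence in a unique block $D_{1/q}(s_k)\times R$ of the partition (\ref{VerticalPartition}), and with this $(s_0s_1s_2\dots)$ the characterization (\ref{CompleteVerticalTrajectorySetsAreCurves}) places $(x,y)$ in $V(f^{(s_0s_1s_2\dots)})$. The reverse containment is immediate because each curve lies in $R^2$, and since the symbol sequence of a point is uniquely determined, distinct words yield disjoint curves, so the union is in fact a disjoint partition of $R^2$.
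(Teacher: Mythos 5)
Your proposal is correct and follows essentially the same route as the paper: both construct the curve as a nested intersection of tubes $V_{(1/q)^{n}}$ obtained by iterating Lemma~\ref{VertTubesLem} via the recursion $f^{(s_0s_1\dots s_n)}=(f^{(s_1\dots s_n)})^{s_0}$, pass to the uniform limit using the ultrametric Cauchy estimate, and deduce the partition from $T(R^2)\subseteq R^2$. You fill in somewhat more detail than the paper at the inductive step (intersecting with the strip $D_{1/q}(s_0)\times R$ to isolate one term of the union) and in identifying $V(f^{(s_0s_1s_2\dots)})$ with $\bigcap_n W_n$, but the argument is the same.
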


\begin{proof}
We first construct a family of $(1/q)$-Lipschitz functions $f^{(s_0s_1s_2\dots)}_{n}:R\to R$, indexed by sequences $(s_0s_1s_2\dots)$ in $\FF$ and integers $n\geq0$.  When $n=0$, we define $f^{(s_0s_1s_2\dots)}_0\equiv s_0$, and thus in this case
\begin{equation}\label{FirstVerticalTubes}
V_{1/q}(f_0^{(s_0s_1s_2\dots)})=D_{1/q}(s_0)\times R.
\end{equation} 

To ease notation, set $\delta_n=1/q^{n+1}$.  Fix $n\geq0$ and assume that the functions $f^{(s_0s_1s_2\dots)}_{n}:R\to R$ have been constructed for all sequences $(s_0s_1s_2\dots)$ in $\FF$.  Given a sequence $(s_0s_1s_2\dots)$, we apply Lemma~\ref{VertTubesLem} with $f=f_n^{(s_1s_2s_3\dots)}$ and $\delta =\delta_n$.  We obtain $(1/q)$-Lipschitz functions $f^s:R\to D_{1/q}(s)$, and we define $f^{(s_0s_1s_2\dots)}_{n+1}=f^{s_0}$.  We then have, for each fixed choice of $s_1,s_2,\dots$ in $\FF$, the identity
\begin{equation}\label{GeneralTwoVerticalTubes}
T^{-1}(V_{\delta_n}(f_n^{(s_1s_2s_3\dots)}))\cap R^2=\bigcup_{s_0\in\FF}V_{\delta_{n+1}}(f^{(s_0s_1s_2\dots)}_{n+1}).
\end{equation}

An induction argument using $(\ref{FirstVerticalTubes})$ and $(\ref{GeneralTwoVerticalTubes})$ shows that for each sequence $(s_0s_1s_2\dots)$, we have
\begin{equation}\label{TrajectorySetsAreTubes}
V_{\delta_{n}}(f^{(s_0s_1s_2\dots)}_{n})=\{(x,y)\in R^2\mid T^k(x,y)\in D_{1/q}(s_k)\times R\text{ for all }0\leq k\leq n\}.
\end{equation}
In other words, $(\ref{TrajectorySetsAreTubes})$ says that $V_{\delta_{n}}(f^{(s_0s_1s_2\dots)}_{n})$ is the set of points in $R^2$ whose partial $T$-orbit $\{T^k(x,y)\}_{k=0}^{n}$ follows a particular trajectory through the $q$ disjoint sets $D_{1/q}(s)\times R$ for $s\in\FF$.  

From $(\ref{TrajectorySetsAreTubes})$ it is clear that
$$
V_{\delta_{n+1}}(f^{(s_0s_1s_2\dots)}_{n+1})\subseteq V_{\delta_{n}}(f^{(s_0s_1s_2\dots)}_{n}),
$$
from which it follows that the limit $f^{(s_0s_1s_2\dots)}(t) := \lim_{n\to+\infty}f^{(s_0s_1s_2\dots)}_{n}(t)$ exists, and a standard argument shows that a limit of $(1/q)$-Lipschitz functions is $(1/q)$-Lipschitz.  We conclude using $(\ref{TrajectorySetsAreTubes})$ that 
$V(f^{(s_0s_1s_2\dots)}) =\bigcap_{n\geq0}V_{\delta_{n}}(f^{(s_0s_1s_2\dots)}_{n})$, obtaining $(\ref{CompleteVerticalTrajectorySetsAreCurves})$.

Since $T(R^2)\subseteq R^2$, it follows that every point in $R^2$ has some forward trajectory through the $q$ sets in the partition $\{D_{1/q}(s)\times R\mid s\in\FF\}$ of $R^2$, and $(\ref{R2Partition})$ follows immediately. 
\end{proof}

\begin{lem}\label{BackwardTrajectoryLemma}
There exists a family of $(1/q)$-Lipschitz functions $g^{(\dots s_{-3}s_{-2}s_{-1})}:R\to R$, indexed by the set of all sequences $(\dots s_{-3}s_{-2}s_{-1})$, where each $s_k\in\FF$, such that
\begin{equation}\label{CompleteHorTrajectorySetsAreCurves}
H(g^{(\dots s_{-3}s_{-2}s_{-1})})=\{(x,y)\in R^2\mid T^{k+1}(x,y)\in R\times D_{1/q}(s_k)\text{ for all }k\leq -1\}.
\end{equation}
Moreover, 
\begin{equation}\label{ATPartition}
\Acal_T=\bigcup_{\dots, s_{-3},s_{-2},s_{-1}\in\FF}H(g^{(\dots s_{-3}s_{-2}s_{-1})}).
\end{equation}
\end{lem}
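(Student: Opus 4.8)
The plan is to mirror the proof of Lemma~\ref{ForwardTrajectoryLemma}, replacing vertical curves by horizontal curves and the inverse map by $T$ itself. Whereas the forward lemma built vertical trajectory curves by repeatedly applying Lemma~\ref{VertTubesLem}, which describes $T^{-1}$ of a vertical tube, here I would repeatedly apply Lemma~\ref{HorTubesLem}, which describes $T$ of a horizontal tube. First I would set $\epsilon_n=(1/q)(|a|/q)^{n-1}$ and construct, for every one-sided sequence $(\dots s_{-2}s_{-1})$ in $\FF$ and every integer $n\ge1$, a $(1/q)$-Lipschitz function $g_n^{(\dots s_{-2}s_{-1})}:R\to R$ satisfying the finite-level identity
\begin{equation*}
H_{\epsilon_n}(g_n^{(\dots s_{-2}s_{-1})})=\{(x,y)\in R^2\mid T^{k+1}(x,y)\in R\times D_{1/q}(s_k)\text{ for all }-n\le k\le -1\}.
\end{equation*}
The base case $n=1$ is the constant function $g_1^{(\dots s_{-1})}\equiv s_{-1}$, for which $H_{1/q}(g_1)=R\times D_{1/q}(s_{-1})$ records exactly the single constraint at $k=-1$. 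Since $|a|/q<1$ by $(\ref{abAssumptions})$, each $\epsilon_n\le1$, so Lemma~\ref{HorTubesLem} applies at every stage.

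For the inductive step I would apply Lemma~\ref{HorTubesLem} to the \emph{shifted} sequence: taking $g=g_n^{(\dots s_{-3}s_{-2})}$ and $\epsilon=\epsilon_n$ produces $(1/q)$-Lipschitz functions $(g_n^{(\dots s_{-3}s_{-2})})^{s}$ indexed by $s\in\FF$, and I would define $g_{n+1}^{(\dots s_{-2}s_{-1})}=(g_n^{(\dots s_{-3}s_{-2})})^{s_{-1}}$, where $s_{-1}$ denotes the union index. The content of Lemma~\ref{HorTubesLem} then reads
\begin{equation*}
T\bigl(H_{\epsilon_n}(g_n^{(\dots s_{-3}s_{-2})})\bigr)=\bigcup_{s_{-1}\in\FF}H_{\epsilon_{n+1}}(g_{n+1}^{(\dots s_{-2}s_{-1})}),
\end{equation*}
and a bookkeeping computation upgrades the finite-level identity from level $n$ to level $n+1$. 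I expect this to be the step requiring the most care: applying $T$ to a point of the shifted-sequence tube turns each constraint on $T^{k+1}$ into a constraint on $T^{k}$ of the image, shifting all constraint indices by one, and one must check that the new symbol $s_{-1}$ furnished by the decomposition in Lemma~\ref{HorTubesLem} is precisely the one the identity demands at $k=-1$.

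Once the finite-level identities are established, the rest parallels the forward lemma. Since $\epsilon_{n+1}<\epsilon_n$ the tubes are nested, $H_{\epsilon_{n+1}}(g_{n+1})\subseteq H_{\epsilon_n}(g_n)$, which forces $|g_{n+1}(t)-g_n(t)|\le\epsilon_n$ for all $t$; because $\epsilon_n\to0$, the functions $g_n^{(\dots s_{-2}s_{-1})}$ converge to a $(1/q)$-Lipschitz limit $g^{(\dots s_{-2}s_{-1})}$, and an intersection argument yields $H(g^{(\dots s_{-2}s_{-1})})=\bigcap_{n\ge1}H_{\epsilon_n}(g_n^{(\dots s_{-2}s_{-1})})$, which is $(\ref{CompleteHorTrajectorySetsAreCurves})$.

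To obtain the partition $(\ref{ATPartition})$ I would use $\Acal_T=\bigcap_{n\ge1}T^n(R^2)$ together with the bijectivity of $T$ on $K^2$: a point $(x,y)$ lies in $\Acal_T$ exactly when $T^{-n}(x,y)\in R^2$ for every $n\ge0$. Since the horizontal sets $\{R\times D_{1/q}(s)\mid s\in\FF\}$ partition $R^2$, each such point determines a unique backward symbol sequence and hence lies in exactly one curve $H(g^{(\dots s_{-2}s_{-1})})$, while conversely every such curve lies in $\Acal_T$. This is where the asymmetry noted in the text enters: because $T(R^2)\subsetneq R^2$, not every point of $R^2$ admits a full backward orbit inside $R^2$, so the union produces $\Acal_T$ rather than all of $R^2$ as in the forward statement $(\ref{R2Partition})$.
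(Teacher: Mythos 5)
Your proposal is correct and follows essentially the same route as the paper's own proof: iterate Lemma~\ref{HorTubesLem} on the shifted sequence starting from the constant function $s_{-1}$, establish the finite-level tube identity by induction, pass to the nested intersection and Lipschitz limit, and characterize $\Acal_T$ via backward orbits staying in $R^2$. The only differences are a harmless reindexing of $\epsilon_n$ (your level $n$ is the paper's level $n-1$) and a bit more explicit bookkeeping in the inductive step and the convergence argument.
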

\begin{proof}
We first construct a family of $(1/q)$-Lipschitz functions $g^{(\dots s_{-3}s_{-2}s_{-1})}_{n}:R\to R$, indexed by the set of sequences $(\dots s_{-3}s_{-2}s_{-1})$ in $\FF$ and integers $n\geq0$.  When $n=0$, we define $g^{(\dots s_{-3}s_{-2}s_{-1})}_0\equiv s_{-1}$, and thus in this case
\begin{equation}\label{FirstHorTubes}
H_{1/q}(g_0^{(\dots s_{-3}s_{-2}s_{-1})})=R \times D_{1/q}(s_{-1}).
\end{equation} 

To ease notation, set $\epsilon_n=|a|^n/q^{n+1}$.  Fix $n\geq0$ and assume that the functions $g^{(\dots s_{-3}s_{-2}s_{-1})}_{n}:R\to R$ have been constructed for all sequences $(\dots s_{-3}s_{-2}s_{-1})$ in $\FF$.  Given a sequence $(\dots s_{-3}s_{-2}s_{-1})$, we apply Lemma~\ref{HorTubesLem} with $g=g_n^{(\dots s_{-4}s_{-3}s_{-2})}$ and $\epsilon=\epsilon_n$, and we obtain $(1/q)$-Lipschitz functions $g^s:R\to D_{1/q}(s)$.  Setting $g^{(\dots s_{-3}s_{-2}s_{-1})}_{n+1}=g^{s_{-1}}$, for each fixed choice of $\dots,s_{-4},s_{-3},s_{-2}$ in $\FF$ we have
\begin{equation}\label{GeneralTwoHorTubes}
T(H_{\epsilon_n}(g_n^{(\dots s_{-4}s_{-3}s_{-2})}))=\bigcup_{s_{-1}\in\FF}H_{\epsilon_{n+1}}(g^{(\dots s_{-3}s_{-2}s_{-1})}_{n+1}).
\end{equation}

An induction argument using $(\ref{FirstHorTubes})$ and $(\ref{GeneralTwoHorTubes})$ shows that for a sequence $(\dots s_{-3}s_{-2}s_{-1})$, we have
\begin{equation}\label{HorTrajectorySetsAreTubes}
H_{\epsilon_{n}}(g^{(\dots s_{-3}s_{-2}s_{-1})}_{n})=\{(x,y)\in R^2\mid T^{k+1}(x,y)\in R\times D_{1/q}(s_k)\text{ for all }-n-1\leq k\leq -1\},
\end{equation}
from which it follows that $H_{\epsilon_{n+1}}(g^{(\dots s_{-3}s_{-2}s_{-1})}_{n+1})\subseteq H_{\epsilon_{n}}(g^{(\dots s_{-3}s_{-2}s_{-1})}_{n})$.  The limit 
$$
g^{(\dots s_{-3}s_{-2}s_{-1})}(t) = \lim_{n\to+\infty}g^{(\dots s_{-3}s_{-2}s_{-1})}_{n}(t)
$$
exists and is $(1/q)$-Lipschitz, and since $H(g^{(\dots s_{-3}s_{-2}s_{-1})})=\bigcap_{n\geq0}H_{\epsilon_{n}}(g^{(\dots s_{-3}s_{-2}s_{-1})}_{n})$ we obtain $(\ref{CompleteHorTrajectorySetsAreCurves})$.  

Since $\Acal_T=\cap_{n\geq0}T^n(R^2)$ is the set of all points in $R^2$ whose entire (backward) orbit is contained in $R^2$, and since each such point must have some backward trajectory through the $q$ sets in the partition $\{R\times D_{1/q}(s)\mid s\in\FF\}$ of $R^2$, we obtain $(\ref{ATPartition})$.
\end{proof}

\begin{proof}[Proof of Theorem~\ref{MainThmIntro}]
Fix a bisequence $\s=(\dots s_{-2}s_{-1}.s_0s_1s_2\dots)\in\FF^\ZZ$.  Then the functions $f^{(s_0s_1s_2\dots)}:R\to R$ and $g^{(\dots s_{-3}s_{-2}s_{-1})}:R\to R$ constructed in Lemmas~\ref{ForwardTrajectoryLemma} and \ref{BackwardTrajectoryLemma} are $(1/q)$-Lipschitz.  It follows from Lemma~\ref{LipschitzTubes} {\bf (c)} that the curves $H(g^{(\dots s_{-3}s_{-2}s_{-1})})$ and $V(f^{(s_0s_1s_2\dots)})$ intersect at a single point in $R^2$.  Denoting this point of intersection by $\omega(\s)$, we obtain a function $\omega:\FF^\ZZ\to \Acal_T$.

Given a point $(x,y)\in \Acal_T$, it follows from the fact that $T(R^2)\subseteq R^2$ and $(\ref{AttractorDef})$ that every point in its $T$-orbit is contained in $R^2$; hence every point in its orbit is contained in one of the $q$ sets $R\times D_{1/q}(s)$ for $s\in\FF$, and similarly every point in its orbit is contained in one of the $q$ sets $D_{1/q}(s)\times R$ for $s\in\FF$.  Define $\s=(s_k)\in\FF^\ZZ$ by $T^k(x,y)\in D_{1/q}(s_k)\times R$ for $k\geq0$, and $T^{k+1}(x,y)\in R\times D_{1/q}(s_k)$ for $k\leq-1$.  The function $(x,y)\mapsto \s$ defines an inverse $\omega^{-1}:\Acal_T\to\FF^\ZZ$ by Lemma~\ref{ForwardTrajectoryLemma} and Lemma~\ref{BackwardTrajectoryLemma}, and so $\omega$ is bijective.  

That $\omega$ and $\omega^{-1}$ are continuous follows as in \cite{MR3757169} Theorem 28.  To summarize this argument, recall that the cylinder sets
$$
\Sigma_{t_{-N},\dots,t_N}=\{\s=(s_k)\in \FF^\ZZ\mid s_k=t_k\text{ for all }|k|\leq N\}.
$$
form a neighborhood base for the topology on $\FF^\ZZ$.  It follows from $(\ref{TrajectorySetsAreTubes})$ and $(\ref{HorTrajectorySetsAreTubes})$ that $\omega(\Sigma_{t_{-N},\dots,t_N})$ is $V_\delta(f)\cap H_\epsilon(g)$ for certain functions $f,g:R\to R$ and certain radii $\delta,\epsilon>0$.  As $V_\delta(f)$ and $H_\epsilon(g)$ are topologically open, this shows that $\omega^{-1}$ is continuous.  It is a standard exercise in basic topology to show that a continuous bijection of compact sets has a continuous inverse, and thus $\omega$ is a homeomorphism.

To see that $\omega\circ\sigma=T\circ\omega$, fix $\s=(s_k)\in\FF^\ZZ$ and let $\t=(t_k)=\sigma(\s)$; thus $t_k=s_{k+1}$.  By Lemma~\ref{ForwardTrajectoryLemma} and Lemma~\ref{BackwardTrajectoryLemma}, we have 
\begin{equation*}
\begin{split}
T^k(\omega(\s)) & \in  D_{1/q}(s_k)\times R \hskip1cm \text{for all $k\geq0$} \\
T^{k+1}(\omega(\s)) & \in  R\times D_{1/q}(s_k) \hskip1cm \text{for all $k\leq-1$}.
\end{split}
\end{equation*}
It follows immediately that
\begin{equation*}
\begin{split}
T^k(T(\omega(\s))) & \in  D_{1/q}(t_k)\times R \hskip1cm \text{for all $k\geq0$} \\
T^{k+1}(T(\omega(\s))) & \in  R\times D_{1/q}(t_k) \hskip1cm \text{for all $k\leq-2$}.
\end{split}
\end{equation*}
It is clear from $(\ref{HypFamily})$ and the fact that $T(R^2)\subseteq R^2$ that $T(D_{1/q}(s)\times R)\subseteq R\times D_{1/q}(s)$ for all $s\in\FF$, and therefore 
$$
T(\omega(\s))\in T(D_{1/q}(s_0)\times R) \subseteq R\times D_{1/q}(s_0)=R\times D_{1/q}(t_{-1}). 
$$
We conclude that
$$
T(\omega(\s))\in H(g^{(\dots t_{-3}t_{-2}t_{-1})})\cap V(f^{(t_0t_1t_2\dots)})
$$ 
and therefore using Lemma~\ref{ForwardTrajectoryLemma} and Lemma~\ref{BackwardTrajectoryLemma} we find that $T(\omega(\s))=\omega(\t)$; in other words, $T(\omega(\s))=\omega(\sigma(\s))$.
\end{proof}

\section{The Bernoulli measure and equidistribution of forward orbits}\label{EquidistSect}

\subsection{Symbolic dynamics and the Bernoulli measure on $\FF^\ZZ$}

Recall that $\FF^\ZZ$ denotes the set of bisequences
$$
(s_k)=(\dots s_{-3}s_{-2}s_{-1}.s_0s_1s_2s_3\dots)
$$
where each $s_k\in\FF$.  The {\em cylinder sets}
\begin{equation}\label{CylinderSet}
\{(s_k)\in\FF^\ZZ\mid s_k=t_k \text{ for all } |k|\leq M\} \hskip1cm (\text{$M\geq0$, $t_{-M},\dots,t_M\in\FF$})
\end{equation}
are a base of open sets for a compact topology on $\FF^\ZZ$, and the shift map $\sigma:\FF^\ZZ\to\FF^\ZZ$ defined in $(\ref{ShiftMap})$ is a homeomorphism.  

Let $\mu_\sigma$ be the $\sigma$-invariant uniform Bernoulli measure on the Borel $\sigma$-algebra $\Mcal(\FF^\ZZ)$ of $\FF^\ZZ$.  Thus $\FF^\ZZ$ has $\mu_\sigma$-measure $1$, and $\mu_\sigma$ assigns measure $1/q^{2M+1}$ to each cylinder set described in $(\ref{CylinderSet})$.  More generally, given any Borel subset $E\in \Mcal(\FF^\ZZ)$ we have $\mu_\sigma(E)=\inf\sum_j\mu_\sigma(U_j)$, the infimum over all countable covers of $E$ by cylinder sets $U_j$ of the form $(\ref{CylinderSet})$.

Now let $\Mcal_{\geq0}(\FF^\ZZ)$ be the sub-$\sigma$-algebra of $\Mcal(\FF^\ZZ)$ consisting of those Borel subsets $E\subseteq\FF^\ZZ$ with the property that
$$
(\dots s_{-3}s_{-2}s_{-1}.s_0s_1s_2s_3\dots)\in E \hskip5mm \Rightarrow \hskip5mm (\dots s_{-3}'s_{-2}'s_{-1}'.s_0s_1s_2s_3\dots)\in E
$$
for any choice of $\dots ,s'_{-3},s_{-2}',s_{-1}'\in\FF$.  In other words, $\Mcal_{\geq0}(\FF^\ZZ)$ consists of those Borel subsets of $\FF^\ZZ$ which are closed under changing any of the negatively indexed terms of its elements. 

\begin{lem}\label{SubSigmaAlgebra}
Let $E\in \Mcal_{\geq0}(\FF^\ZZ)$ have $\mu_\sigma$-measure zero.  Then for any $\epsilon>0$, $E$ is covered by a countable collection of cylinder sets of the form
\begin{equation}\label{CylinderSet2}
\{(s_k)\in\FF^\ZZ\mid s_k=t_k \text{ for all } 0\leq k\leq M\} \hskip1cm (\text{$M\geq0$, $t_0,\dots,t_M\in\FF$})
\end{equation}
in $\Mcal_{\geq0}(\FF^\ZZ)$, the sum of whose $\mu_\sigma$-measures is $\leq\epsilon$.

\end{lem}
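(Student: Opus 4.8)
The plan is to reduce this two-sided statement to a one-sided outer-regularity fact by passing to the space of forward sequences. Write $\FF^\NN$ for the space of one-sided sequences $(s_0 s_1 s_2 \dots)$, let $p\colon \FF^\ZZ \to \FF^\NN$ be the projection that forgets the negatively indexed coordinates, and let $\iota\colon \FF^\NN \to \FF^\ZZ$ be the continuous section that pads the negative coordinates with a fixed symbol, so that $p\circ\iota=\id$. The hypothesis $E \in \Mcal_{\geq 0}(\FF^\ZZ)$ says precisely that $E$ is a union of fibers of $p$; thus $E = p^{-1}(E')$, where $E' := p(E)$. Since $E$ is saturated under changing negative coordinates, one checks $E' = \iota^{-1}(E)$, and because $\iota$ is continuous this exhibits $E'$ as a Borel subset of $\FF^\NN$.

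Next I would record the measure-theoretic bookkeeping. The pushforward $p_\ast \mu_\sigma$ is the uniform Bernoulli measure $\mu'$ on $\FF^\NN$, so that $\mu'(A') = \mu_\sigma(p^{-1}(A'))$ for every Borel $A' \subseteq \FF^\NN$. Applying this to $A' = E'$ and using $E = p^{-1}(E')$ gives $\mu'(E') = \mu_\sigma(E) = 0$. The measure $\mu'$ admits the same outer-measure description by cylinder sets as the one recorded above for $\mu_\sigma$: for every Borel set it equals the infimum of $\sum_j \mu'(V_j)$ over countable covers by one-sided cylinders $V_j = \{(s_k) \in \FF^\NN : s_k = t_k \text{ for } 0 \leq k \leq M_j\}$. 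Since $\mu'(E') = 0$, for the given $\epsilon > 0$ I can therefore choose such a cover $E' \subseteq \bigcup_j V_j$ with $\sum_j \mu'(V_j) \leq \epsilon$.

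Finally I would pull the cover back. Each $p^{-1}(V_j)$ is exactly a cylinder set of the form $(\ref{CylinderSet2})$, hence lies in $\Mcal_{\geq 0}(\FF^\ZZ)$, and $\mu_\sigma(p^{-1}(V_j)) = \mu'(V_j)$. From $E = p^{-1}(E') \subseteq \bigcup_j p^{-1}(V_j)$ together with $\sum_j \mu_\sigma(p^{-1}(V_j)) = \sum_j \mu'(V_j) \leq \epsilon$, the lemma follows.

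The main obstacle is conceptual rather than computational: one must resist the temptation to cover $E$ by two-sided cylinders and then enlarge each to a one-sided cylinder, since enlarging a cylinder constraining $|k|\leq M$ to one constraining only $0\leq k\leq M$ multiplies its measure by $q^M$, and these blow-up factors cannot be controlled uniformly over the cover. The saturation hypothesis $E \in \Mcal_{\geq 0}(\FF^\ZZ)$ is exactly what lets us instead build the cover downstairs on $\FF^\NN$, where no negative coordinates are present and the measure is preserved under $p^{-1}$. The only point requiring a little care is the measurability of $E' = p(E)$, which is why I pass through the section $\iota$ rather than appeal to a projection-of-a-Borel-set argument.
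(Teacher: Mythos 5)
Your proposal is correct and follows essentially the same route as the paper: both pass to the one-sided sequence space via the forgetful projection, identify $\Mcal_{\geq0}(\FF^\ZZ)$ measure-preservingly with the Borel $\sigma$-algebra there, and invoke the cylinder-cover (outer regularity) description of the one-sided Bernoulli measure before pulling the cover back. Your use of the continuous section $\iota$ to certify that $p(E)$ is Borel is a welcome detail that the paper leaves implicit in its assertion that $E\mapsto P(E)$ is a bijection onto the Borel $\sigma$-algebra.
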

\begin{proof}
Letting $\FF^{\ZZ_{\geq0}}$ denote the space of sequences $(s_0s_1s_2\dots)$ with each $s_k\in\FF$, define a forgetful map $P:\FF^\ZZ\to \FF^{\ZZ_{\geq0}}$ by $P(\dots s_{-3}s_{-2}s_{-1}.s_0s_1s_2s_3\dots)=(s_0s_1s_2s_3\dots)$.  The map $E\mapsto P(E)$ is a measure-preserving bijection from $\Mcal_{\geq0}(\FF^\ZZ)$ to the Borel $\sigma$-algebra $\Mcal(\FF^{\ZZ_{\geq0}})$ of $\FF^{\ZZ_{\geq0}}$.  Thus the statement of the lemma follows at once from the definition of the uniform Bernoulli measure defined on the Borel $\sigma$-algebra of $\FF^{\ZZ_{\geq0}}$.
\end{proof}

\subsection{The equidistriubtion of forward orbits}

Define $\mu_T$ to be unit Borel measure on $K^2$ obtained as the pushforward of the Bernoulli measure $\mu_\sigma$ via the homeomorphism $\omega:\FF^\ZZ\to \Acal_T$ described in Theorem~\ref{MainThmIntro}.  In other words, $\mu_T(A)=\mu_\sigma(\omega^{-1}(A))$ for all Borel subsets $A$ of $K^2$.  The measure $\mu_T$ is supported on $\Acal_T$, and it is $T$-invariant by Theorem~\ref{MainThmIntro} and the fact that $\mu_\sigma$ is $\sigma$-invariant.

We say a point $(x,y)\in K^2$ is {\em $\mu_T$-generic} if its forward orbit $\{T^n(x,y)\}_{n=0}^{+\infty}$ is $\mu_T$-equidistributed, in the sense that for all continuous functions $F:K^2\to\RR$, we have
\begin{equation}\label{GenericDef}
\lim_{N\to+\infty}\frac{1}{N}\sum_{n=0}^{N-1}F(T^n(x,y))=\int F d\mu_T.
\end{equation}

\begin{lem}\label{ErgodicTheoremStepLem}
$\mu_T$-almost all points in $\Acal_T$ are $\mu_T$-generic.
\end{lem}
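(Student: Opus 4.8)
The plan is to realize $(\Acal_T, T, \mu_T)$ as an ergodic measure-preserving system and then invoke the Birkhoff pointwise ergodic theorem. First I would transfer the ergodicity of the Bernoulli shift. The uniform Bernoulli measure $\mu_\sigma$ is ergodic -- indeed mixing -- with respect to $\sigma:\FF^\ZZ\to\FF^\ZZ$; this is classical. By Theorem~\ref{MainThmIntro} the map $\omega:\FF^\ZZ\to\Acal_T$ is a homeomorphism with $\omega\circ\sigma=T\circ\omega$, and by construction $\mu_T=\omega_*\mu_\sigma$. Thus $\omega$ is a measure-theoretic isomorphism intertwining $\sigma$ and $T$, and since ergodicity is an isomorphism invariant, $\mu_T$ is $T$-ergodic. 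Note also that every forward orbit of a point of $\Acal_T$ remains in $\Acal_T$, since $T(\Acal_T)\subseteq\Acal_T$.

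For a fixed continuous $F:K^2\to\RR$, its restriction to the compact set $\Acal_T$ lies in $L^1(\mu_T)$, so the Birkhoff ergodic theorem combined with ergodicity produces a set $G_F\subseteq\Acal_T$ of full $\mu_T$-measure on which the time averages $\frac1N\sum_{n=0}^{N-1}F(T^n(x,y))$ converge to $\int F\,d\mu_T$.

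The one genuine subtlety is that $\mu_T$-genericity demands this convergence simultaneously for \emph{every} continuous $F$, while Birkhoff yields a null exceptional set depending on $F$. I would resolve this using separability. Since $\Acal_T$ is a compact metric space, $C(\Acal_T)$ has a countable dense subset $\{F_j\}$ in the supremum norm (here it is essential to restrict to $\Acal_T$, as only the values of $F$ along the orbit and against $\mu_T$ matter). The intersection $G=\bigcap_j G_{F_j}$ is a countable intersection of full-measure sets and hence has full $\mu_T$-measure. For $(x,y)\in G$ and an arbitrary continuous $F$, choosing $F_j$ with $\sup_{\Acal_T}|F-F_j|$ small and using the uniform estimates
$$\left|\frac1N\sum_{n=0}^{N-1}(F-F_j)(T^n(x,y))\right|\leq\sup_{\Acal_T}|F-F_j|,\qquad \left|\int(F-F_j)\,d\mu_T\right|\leq\sup_{\Acal_T}|F-F_j|$$
(valid because the orbit stays in $\Acal_T$) shows that the time averages of $F$ also converge to $\int F\,d\mu_T$. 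Hence every point of $G$ is $\mu_T$-generic.

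The main obstacle is precisely this last step -- upgrading the $F$-dependent almost-everywhere convergence from Birkhoff to a single full-measure set that works for all $F$ at once -- but it is dispatched cleanly by the separability of $C(\Acal_T)$ and the sup-norm bounds above. The remaining ingredients, namely ergodicity of the Bernoulli shift and its transfer through $\omega$, are standard.
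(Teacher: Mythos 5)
Your proof is correct and follows essentially the same route as the paper: transfer ergodicity of the Bernoulli shift through the conjugacy $\omega$, apply the Birkhoff ergodic theorem to a countable family of test functions, intersect the resulting full-measure sets, and pass to arbitrary continuous $F$ by uniform approximation. The only (cosmetic) difference is that you invoke the abstract separability of $C(\Acal_T)$ to get your countable dense family, whereas the paper builds one explicitly from characteristic functions of balls (which are continuous here since balls are clopen) and cites Stone--Weierstrass.
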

\begin{proof}
Let $\{B_j\}$ be the (countable) collection of all balls in $R^2$.  Let $A$ be the space of all finite real linear combinations of the characteristic functions $\chi_{B_j}$ of these balls.  As $\chi_{B_j}\chi_{B_{j'}}=\chi_{B_j\cap B_{j'}}$ and $B_j\cap B_{j'}$ is either the empty set or itself a ball, the space $A$ is a point-separating subalgebra of the space $\Ccal(R^2)$ of all continuous real-valued functions on $R^2$, and therefore $A$ is dense in $\Ccal(R^2)$ by the Stone-Weierstrass theorem.

By the topological conjugacy $\omega:\FF^\ZZ\to\Acal_T$ and the fact that the shift map on $\FF^\ZZ$ is ergodic (\cite{MR648108} Thm 1.12) with respect to $\mu_\sigma$, we know that the map $T:\Acal_T\to\Acal_T$ is ergodic with respect to $\mu_T$.  It follows from the Birkhoff ergodic theorem (\cite{MR648108} Thm 1.14) that for each ball $B_j$ in $R^2$, the limit $(\ref{GenericDef})$ holds for $F=\chi_{B_j}$ and for all $(x,y)$ in a set $S_j\subseteq\Acal_T$ of full $\mu_T$-measure in $\Acal_T$.  Thus $S=\cap_jS_j$ has full $\mu_T$-measure in $\Acal_T$ and $(\ref{GenericDef})$ holds for all $F=\chi_{B_j}$ and for all $(x,y)\in S$.  That $(\ref{GenericDef})$ holds for all continuous $F:K^2\to\RR$ and all points $(x,y)\in S$ follows from a standard argument approximating $F$ uniformly in $R^2$ by a real linear combination of the $\chi_{B_j}$, which we omit.
\end{proof}

In order to deduce Theorem~\ref{EquidistThmIntro} from Lemma~\ref{ErgodicTheoremStepLem}, we introduce a certain collection of subsets of $R^2$ which play a role analogous to the stable manifolds in smooth dynamics.  Recall from the proof of Theorem~\ref{MainThmIntro} that, because $T(R^2)\subseteq R^2$, the forward orbit of each point in $R^2$ traverses some trajectory through the $q$ sets in the partition $\{D_{1/q}(s)\times R\mid s\in\FF\}$ of $R^2$.  Thus each point $(x,y)\in R^2$ determines a sequence $(s_0s_1s_2\dots)$ with $s_k\in\FF$ by 
\begin{equation}\label{PointAndTrajectory}
T^k(x,y)\in D_{1/q}(s_k)\times R\text{ for all }k\geq0.
\end{equation}
Define an equivalence relation on $R^2$ in which two points $(x,y),(x',y')\in R^2$ are equivalent if and only if (in an obvious notation) $s_k=s'_k$ for all $k\geq0$.  Denote by $W_{x,y}$ the equivalence class of the point $(x,y)\in R^2$.  From Lemma~\ref{ForwardTrajectoryLemma} we recall that if the point $(x,y)\in R^2$ and the sequence $(s_0s_1s_2\dots)$ are related by $(\ref{PointAndTrajectory})$, we have
$$
W_{x,y}=V(f^{(s_0s_1s_2\dots)}).
$$
In other words, $W_{x,y}$ may also be described as the vertical curve associated to the $(1/q)$-Lipschitz function $f^{(s_0s_1s_2\dots)}:R\to R$ constructed in Lemma~\ref{ForwardTrajectoryLemma}.  

The following lemma states that each equivalence class $W_{x,y}$ consists either entirely of $\mu_T$-generic points or entirely of non-$\mu_T$-generic points. The proof is based on the principle that any two points in $W_{x,y}$ have asymptotically the same forward orbit.

\begin{lem}\label{StabManGeneric}
If $(x,y)\in R^2$ is $\mu_T$-generic, then every point in $W_{x,y}$ is $\mu_T$-generic.
\end{lem}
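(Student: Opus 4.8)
The plan is to exploit the key geometric fact, recorded in Lemma~\ref{ForwardTrajectoryLemma}, that $W_{x,y}$ is a single vertical curve $V(f^{(s_0s_1s_2\dots)})$ associated to a $(1/q)$-Lipschitz function. Two points of $W_{x,y}$ share the same forward trajectory $(s_0s_1s_2\dots)$ through the partition $\{D_{1/q}(s)\times R\mid s\in\FF\}$, and I claim their forward orbits converge to each other geometrically. Concretely, let $(x,y),(x',y')\in W_{x,y}$, so that for every $n\geq 0$ both points lie in the thinner vertical neighborhood $V_{\delta_n}(f_n^{(s_0s_1s_2\dots)})$ built in Lemma~\ref{ForwardTrajectoryLemma}, where $\delta_n=1/q^{n+1}$. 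First I would track how the distance between the two orbits shrinks: applying the iterated vertical-tube construction, both $T^n(x,y)$ and $T^n(x',y')$ are forced into sets whose horizontal extent is controlled, and because $T$ expands only in the direction transverse to these vertical curves (the eigenvalue estimate $|\lambda_{\min}|=|a|/|b|<1$ in the contracting direction), I expect
$$
\|T^n(x,y)-T^n(x',y')\|\longrightarrow 0 \quad\text{as }n\to+\infty.
$$
The cleanest route is to verify this contraction directly from the defining relation $T(x,y)=(ay+\phi(x),x)$ together with Lemma~\ref{MainPhiLemma}, rather than from the abstract tube nesting.

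Once the asymptotic-orbit statement is in hand, the equidistribution transfer is routine. Fix a continuous $F:K^2\to\RR$; since $R^2$ is compact, $F$ is uniformly continuous there, so for any $\eta>0$ there is $N_0$ with $|F(T^n(x,y))-F(T^n(x',y'))|<\eta$ for all $n\geq N_0$. Comparing the two Birkhoff averages,
$$
\Bigl|\frac{1}{N}\sum_{n=0}^{N-1}F(T^n(x,y))-\frac{1}{N}\sum_{n=0}^{N-1}F(T^n(x',y'))\Bigr|\leq \frac{2N_0\|F\|_\infty}{N}+\eta,
$$
and letting $N\to+\infty$ then $\eta\to0$ shows the two averages have the same limit. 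Hence if $(x,y)$ is $\mu_T$-generic—so the first average converges to $\int F\,d\mu_T$ for every continuous $F$—then so is $(x',y')$, which gives the lemma.

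The main obstacle is establishing the asymptotic convergence of the two forward orbits with the right uniformity. The subtlety is that $T$ is expanding in one direction, so I must be careful to use that the two points lie on the \emph{same} vertical curve $W_{x,y}$, which is precisely the stable direction along which $T$ contracts; this is what the common trajectory $(s_0s_1s_2\dots)$ encodes. I would make this quantitative by showing that the vertical separation $|y-y'|$ (and consequently the separation after each application of $T$) is multiplied by a factor of size at most $|a|<1$ at each step, using Lemma~\ref{MainPhiLemma} to control the $\phi$-terms and the $(1/q)$-Lipschitz property of $f^{(s_0s_1s_2\dots)}$ to bound the horizontal separation in terms of the vertical one. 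Getting this one-step contraction estimate right is the crux; the rest is the standard Stone–Weierstrass-style averaging already used in Lemma~\ref{ErgodicTheoremStepLem}.
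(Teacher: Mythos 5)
Your overall architecture is exactly the paper's: first prove $\|T^n(x,y)-T^n(x',y')\|\to0$ for $(x',y')\in W_{x,y}$, then transfer the Birkhoff averages using uniform continuity of $F$ on the compact set $R^2$. The second half of your argument is fine and is what the paper does.

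The issue is the crux step you flagged yourself. The one-step estimate you propose --- the separation contracts by a factor $|a|$ per step, ``using Lemma~\ref{MainPhiLemma} to control the $\phi$-terms'' --- does not go through as stated. Writing $(x_n,y_n)=T^n(x,y)$ and $(x_n',y_n')=T^n(x',y')$, the first coordinate of the difference at step $n+1$ is $a(y_n-y_n')+\phi(x_n)-\phi(x_n')$, and Lemma~\ref{MainPhiLemma} gives $|\phi(x_n)-\phi(x_n')|=q\,|x_n-x_n'|$: the $\phi$-term \emph{expands} distances by $q$. Combined with $|x_n-x_n'|\le(1/q)|y_n-y_n'|$ this only bounds the first coordinate by $|y_n-y_n'|$, i.e.\ no contraction at all, and certainly not a factor of $|a|$. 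The estimate that actually closes the induction makes no use of $\phi$: both iterates lie on the \emph{shifted} vertical curve $V(f^{(s_ns_{n+1}\dots)})$ for every $n$ (not just the single curve $V(f^{(s_0s_1\dots)})$ you invoke), so the Lipschitz bound $|x_n-x_n'|\le(1/q)|y_n-y_n'|$ holds at every stage, and since $y_{n+1}=x_n$ one gets $\|T^{n+1}(x,y)-T^{n+1}(x',y')\|=|y_{n+1}-y_{n+1}'|\le(1/q)\,\|T^n(x,y)-T^n(x',y')\|$, which suffices. The paper instead sidesteps the coordinate computation entirely: it shows $T^n(x,y)$ and $T^n(x',y')$ both lie in $V(f^{(t_0t_1\dots)})\cap H_{\epsilon_{n-1}}(g^{(\dots t_{-2}t_{-1})}_{n-1})$ with $t_k=s_{k+n}$, which by Lemma~\ref{LipschitzTubes}~(d) sits inside a ball of radius $\epsilon_{n-1}=|a|^{n-1}/q^{n}$; that route even delivers the sharper per-step rate $|a|/q$ predicted by your eigenvalue heuristic. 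So: right plan, but the contraction mechanism you wrote down must be replaced by either the shifted-curve Lipschitz argument or the paper's tube-nesting argument.
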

\begin{proof}
Assume that $(x,y)\in R^2$ is $\mu_T$-generic and let $(x',y')\in W_{x,y}$.  We will show that
\begin{equation}\label{SameAsymptoticOrbit}
\|T^n(x,y)-T^n(x',y')\|\to0 \text{ as }n\to+\infty.
\end{equation}
By hypothesis, there exists a sequence $(s_0s_1s_2\dots)$ in $\FF$ such that both $T^k(x,y)$ and $T^k(x',y')$ are in $D_{1/q}(s_k)\times R$ for all $k\geq0$.  Fixing $n\geq1$, we deduce that both $T^k(T^n(x,y))$ and $T^k(T^n(x',y'))$ are in $D_{1/q}(t_k)\times R$ for all $k\geq-n$, where $t_k=s_{k+n}$.  Thus whenever $k\geq-n$, we have
$$
T^{k+1}(T^n(x,y))\in T(D_{1/q}(t_k)\times R)\subseteq R\times D_{1/q}(t_k)
$$
and likewise for $T^{k+1}(T^n(x',y'))$.  We deduce using $(\ref{TrajectorySetsAreTubes})$ and $(\ref{HorTrajectorySetsAreTubes})$ that both $T^n(x,y)$ and $T^n(x',y')$ are elements of 
$$
V(f^{(t_0t_1t_2\dots)})\cap H_{\epsilon_{n-1}}(g^{(\dots t_{-3}t_{-2}t_{-1})}_{n-1}),
$$
which by Lemma~\ref{LipschitzTubes} {\bf (d)} is contained in a ball of radius $\epsilon_{n-1}$.  As $\epsilon_{n-1}\to0$, we conclude $(\ref{SameAsymptoticOrbit})$.

Given a continuous function $F:K^2\to\RR$, it must be uniformly continuous on $R^2$ by compactness, and so a standard argument shows that $(\ref{SameAsymptoticOrbit})$ implies
\begin{equation}\label{SameAsymptoticOrbit2}
|F(T^n(x,y))-F(T^n(x',y'))|\to0 \text{ as }n\to+\infty.
\end{equation}
We then have
\begin{equation*}
\begin{split}
\bigg|\frac{1}{N}\sum_{n=0}^{N-1}F(T^n(x',y'))-\int F d\mu_T\bigg|  \leq & \frac{1}{N}\sum_{n=0}^{N-1}|F(T^n(x',y'))-F(T^n(x,y))| \\
	& + \bigg|\frac{1}{N}\sum_{n=0}^{N-1}F(T^n(x,y))-\int F d\mu_T\bigg|
\end{split}
\end{equation*}
and therefore
\begin{equation*}
\bigg|\frac{1}{N}\sum_{n=0}^{N-1}F(T^n(x',y'))-\int F d\mu_T\bigg| \to0 \text{ as } N\to+\infty,
\end{equation*}
completing the proof that $(x',y')$ is $\mu_T$-generic.
\end{proof}

\begin{proof}[Proof of Theorem~\ref{EquidistThmIntro}]
Let $\lambda$ denote Haar measure on the Borel $\sigma$-algebra of $K^2$, normalized so that $\lambda(R^2)=1$.  We first show that the set
$$
E=\{(x,y)\in R^2\mid (x,y)\text{ is not $\mu_T$-generic}\}
$$ 
has Haar measure zero.  By Lemma~\ref{ErgodicTheoremStepLem}, $E\cap\Acal_T$ has $\mu_T$-measure zero, and by Lemma~\ref{StabManGeneric}, $E\cap\Acal_T$ is the image under $\omega:\FF^\ZZ\to\Acal_T$ of a set in the sub-$\sigma$-algebra $\Mcal_{\geq0}(\FF^\ZZ)$ of the Borel $\sigma$-algebra $\Mcal(\FF^\ZZ)$.  Fixing $\epsilon>0$, it follows from Lemma~\ref{SubSigmaAlgebra} that there exists a countable collection $\{U_j\}$ of subsets of $\Acal_T$ satisfying the following properties: 
\begin{itemize}
\item $E\cap \Acal_T\subseteq \cup_jU_j$
\item each $U_j$ is the image under $\omega:\FF^\ZZ\to\Acal_T$ of a cylinder set of the form $(\ref{CylinderSet2})$
\item $\sum_j\mu_T(U_j)\leq\epsilon$.
\end{itemize}

Note that for every $(x,y)\in R^2$, the set $W_{x,y}$ contains (uncountably many) points of $\Acal_T$.  Indeed, if the sequence $(s_0s_1s_2\dots)$ is defined by $(\ref{PointAndTrajectory})$, then we may select $\dots, s_{-3},s_{-2},s_{-1}$ in $\FF$ arbitrarily and take $(x',y')=\omega((\dots s_{-3}s_{-2}s_{-1}.s_0s_1s_2\dots))\in\Acal_T\cap W_{x,y}$.  Using Lemma~\ref{StabManGeneric} along with the facts that each $W_{x,y}$ meets $\Acal_T$ and that the $U_j$ cover $E\cap\Acal_T$, we have
\begin{equation}\label{PartitionOfE}
\begin{split}
E & = \bigcup_{(x,y)\in E}W_{x,y} \\
	& = \bigcup_{(x,y)\in E\cap\Acal_T}W_{x,y} \\
	& \subseteq \bigcup_j\bigcup_{(x,y)\in U_j}W_{x,y} \\
\end{split}
\end{equation}
Now, for each $j$, the set $U_j$ is the image under $\omega$ of a cylinder set of the form $(\ref{CylinderSet2})$, and using $(\ref{TrajectorySetsAreTubes})$ we therefore have
\begin{equation*}
\begin{split}
\bigcup_{(x,y)\in U_j}W_{x,y} & =\{(x,y)\in R^2\mid T^k(x,y)\in D_{1/q}(t_k)\times R\text{ for all }0\leq k\leq M\} \\
	& = V_{\delta_M}(f^{(t_0t_1t_2\dots)}).
\end{split}
\end{equation*}
Note that by Lemma~\ref{LipschitzTubes} {\bf (b)}, $V_{\delta_M}(f^{(t_0t_1t_2\dots)})$ is a union of $1/\delta_M$ balls of radius $\delta_M$, and so its Haar measure is $(1/\delta_M)\delta_M^2=\delta_M=1/q^{M+1}=\mu_T(U_j)$.  We have shown that
$$
\lambda(\bigcup_{(x,y)\in U_j}W_{x,y})=\mu_T(U_j)
$$
and we obtain
\begin{equation*}
\begin{split}
\lambda(E) & \leq \sum_j\lambda(\bigcup_{(x,y)\in U_j}W_{x,y}) =\sum_j\mu_T(U_j)\leq\epsilon.
\end{split}
\end{equation*}
As $\epsilon>0$ was arbitrary, we conclude that $E$ has Haar measure zero.

Using the fact that any two points in the same orbit have (eventually) the same forward orbit, it is not hard to see that if a point $(x,y)\in K^2$ is $\mu_T$-generic, then every point in its orbit $\{T^n(x,y)\}_{n=-\infty}^{+\infty}$ is $\mu_T$-generic.  Also, a straightforward argument using the fact that polynomial maps are Lipschitz on bounded sets implies that if $E\subseteq K^2$ has Haar measure zero, then so does $T^{-1}(E)$.  We conclude from these facts and $(\ref{BasinDef})$ that
\begin{equation*}
\begin{split}
\{(x,y)\in\Bcal_T\mid (x,y)\text{ is not $\mu_T$-generic}\} & = \bigcup_{n\geq1}T^{-n}(E)
\end{split}
\end{equation*}
has Haar measure zero.
\end{proof}

\section{The Hausdorff dimension of $\Acal_T$}

We follow the notation and definitions in \cite{MR1681462} $\S$11.2-11.3 for our discussion of Hausdorff dimension.  Given a subset $A\subseteq K^2$, let 
$$
\diam(A)=\sup\{\|(x,y)-(x',y')\| \,\mid\,(x,y),(x',y')\in A\}
$$
denote its diameter, and note that by the strong triangle inequality, the diameter of a ball in $K^2$ is the same as its radius.  

For parameters $p\geq0$ and $\delta\in q^\ZZ$, set 
$$
\Hcal_{p,\delta}(A)=\inf\sum_j\diam(B_j)^p,
$$
the infimum over all countable covers $\{B_j\}$ of $A$ with $\diam(B_j)\leq\delta$.  The {\em $p$-dimensional Hausdorff outer measure} of $A$ is defined by
\begin{equation}\label{HausOuterMeas}
\Hcal_p(A)=\lim_{\delta\to0}H_{p,\delta}(A).
\end{equation}
The {\em Hausdorff dimension} of $A$ is the unique nonnegative real number $\dim A$ satisfying
\begin{equation}\label{HausDimDef}
\begin{split}
\Hcal_p(A) & =0\text{ for all } p>\dim A, \text{ and} \\
\Hcal_p(A) & =+\infty \text{ for all } p<\dim A.
\end{split}
\end{equation}
For the existence of the limit $(\ref{HausOuterMeas})$ and of such a unique real number satisfying $(\ref{HausDimDef})$, see \cite{MR1681462} $\S$11.2.

\begin{lem}\label{BernMeasLem}
There exists a constant $C>0$ such that $\mu_T(A)\leq C\cdot\diam(A)^\alpha$ for all Borel measurable subsets $A$ of $K^2$, where $\alpha=1+1/(1+\log_q(1/|a|))$.
\end{lem}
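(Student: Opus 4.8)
The plan is to reduce the statement to an estimate on balls and then to exploit the symbolic description of $\mu_T$ together with the two distinct contraction rates built into Lemmas~\ref{VertTubesLem} and \ref{HorTubesLem}. First I would observe that, by the strong triangle inequality, any Borel set $A$ of diameter $d$ is contained in a ball $B_r(x_0,y_0)$, where $r\in q^\ZZ$ is the largest such value with $r\le d$; since $\mu_T$ is supported on $\Acal_T$ and gives measure zero to points, it suffices to prove $\mu_T(B)\le C\,r^\alpha$ for every ball $B=B_r(x_0,y_0)$ with $r=q^{-\ell}\le1$ (the case $r=1$ being trivial). It is also worth recording at the outset that the hypothesis $0<|a|<1$ together with $|K^\times|=q^\ZZ$ forces $c:=\log_q(1/|a|)$ to be a \emph{positive integer}, so that $\alpha=1+1/(1+c)=(c+2)/(c+1)$ and all the tube-widths below lie in $q^\ZZ$.

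The key point is that $\mu_T$ assigns mass $q^{-(M+N+1)}$ to each ``symbolic box'' obtained as the $\omega$-image of the cylinder fixing the $M+N+1$ symbols $s_{-N},\dots,s_{-1},s_0,\dots,s_M$. By Lemmas~\ref{ForwardTrajectoryLemma} and~\ref{BackwardTrajectoryLemma}, together with $(\ref{TrajectorySetsAreTubes})$ and $(\ref{HorTrajectorySetsAreTubes})$, this box is precisely $\Acal_T\cap V_{\delta_M}(f_M^{(s_0s_1\dots)})\cap H_{\epsilon_{N-1}}(g_{N-1}^{(\dots s_{-2}s_{-1})})$, a set of $x$-width $\delta_M=q^{-(M+1)}$ and $y$-width $\epsilon_{N-1}=q^{-((c+1)N-c)}$. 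Given $r=q^{-\ell}$, I would choose $M=\ell-1$ (so that $\delta_M=r$) and $N=\lceil(\ell+c)/(c+1)\rceil$ (the least $N$ with $\epsilon_{N-1}\le r$). With these choices every such box has both widths $\le r$, hence diameter $\le r$. Because the cylinders partition $\FF^\ZZ$ and $\omega$ is a bijection onto $\Acal_T$, these boxes partition $\Acal_T$, so
\begin{equation*}
\mu_T(B)=\mu_T(B\cap\Acal_T)\le \big(\#\{\text{boxes meeting }B\}\big)\cdot q^{-(M+N+1)}.
\end{equation*}

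The heart of the argument is to bound the number of boxes meeting $B$ independently of $r$. Since each box lies inside $V_{\delta_M}\cap H_{\epsilon_{N-1}}$, and a box is determined by the pair consisting of its vertical tube and its horizontal tube, it suffices to bound separately the number of vertical tubes $V_{\delta_M}$ and horizontal tubes $H_{\epsilon_{N-1}}$ that meet $B$. For the vertical tubes I would use that, at fixed depth, they are pairwise disjoint and partition $R^2$, so on the slice $y=y_0$ they cut $R$ into discs of radius $\delta_M$; the $(1/q)$-Lipschitz property makes each tube wander by at most $r/q<r$ across the $y$-range of $B$, and a short strong-triangle-inequality computation then forces any tube meeting $B$ to cross $y=y_0$ inside $D_r(x_0)$. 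As $\delta_M=r$, exactly one vertical tube qualifies. The identical reasoning on the slice $x=x_0$ bounds the number of horizontal tubes meeting $B$ by $r/\epsilon_{N-1}=q^{(c+1)N-c-\ell}$, and the choice of $N$ places this exponent in $[0,c+1)$, so the count is at most $q^{c}$. Hence at most $q^{c}$ boxes meet $B$.

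Finally I would combine the estimates: $\mu_T(B)\le q^{c}\,q^{-(M+N+1)}=q^{c}\,q^{-(\ell+N)}$, and the inequality $\ell+N\ge\alpha\ell+c/(c+1)\ge\alpha\ell$, which follows from $N\ge(\ell+c)/(c+1)$, yields $\mu_T(B)\le q^{c}\,r^{\alpha}$. Tracing this back through the reduction gives the lemma with $C=q^{c}=1/|a|$. I expect the main obstacle to be the counting step, and specifically the need to verify cleanly that the Lipschitz wandering of the tubes does not corrupt the disjointness-based count in either direction. The asymmetry between $\delta_M$ and $\epsilon_{N-1}$, which is exactly where the factor $|a|$ and hence the non-integral exponent enters, must be tracked carefully through the indices $M$ and $N$.
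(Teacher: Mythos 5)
Your argument is correct, and it is built on the same foundation as the paper's proof: the symbolic boxes $\Acal_T\cap V_{\delta}(f)\cap H_{\epsilon}(g)$ carry $\mu_T$-mass that can be read off from the Bernoulli measure, and the exponent $\alpha$ emerges from playing the vertical scale $\delta_n=q^{-(n+1)}$ against the horizontal scale $\epsilon_m=|a|^m/q^{m+1}$. The difference is the direction in which you run the comparison. You fix the \emph{finest} scales with $\delta_M=r$ and $\epsilon_{N-1}\le r$, so that several boxes are needed to capture $B_r(x_0,y_0)\cap\Acal_T$, and then count the boxes meeting the ball (at most one vertical tube times at most $q^{c}$ horizontal tubes, via the Lipschitz-wandering and disjointness argument); this count is sound as written, and your slice argument is essentially the content of Lemma~\ref{LipschitzTubes}~{\bf (a)}. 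The paper instead takes the \emph{coarsest} scales, choosing $n,m$ maximal with $r\le\delta_n$ and $r\le\epsilon_m$, so that Lemma~\ref{LipschitzTubes}~{\bf (a)} places the whole ball inside a \emph{single} box $V_{\delta_n}(f_n^{(t_0t_1\dots)})\cap H_{\epsilon_m}(g_m^{(\dots t_{-2}t_{-1})})$ of $\mu_T$-measure at most $q^{-(m+n+2)}$; the maximality gives $r>\delta_{n+1}$ and $r>\epsilon_{m+1}>(|a|/q)^{m+2}$, which convert $q^{-(m+n+2)}$ into $q^{2}r^{\alpha}$ in two lines. The containment direction eliminates your counting step entirely (and with it the need to track the integrality of $c$ and the ceiling defining $N$), at the cost of a slightly different constant ($q^{2}$ versus your $1/|a|$), which is immaterial for Theorem~\ref{DimThmIntro}.
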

\begin{proof}
If $\mu_T(A)=0$, there is nothing to prove, so assume $\mu_T(A)>0$.  Since $\mu_T$ is supported on $\Acal_T$, there exists a point $(x_0,y_0)\in A\cap \Acal_T$.  Letting $r=\diam(A)$, we have $A\subseteq B_r(x_0,y_0)$, and using Theorem~\ref{MainThmIntro}, we have $\omega(\t)=(x_0,y_0)$ for some $\t=(t_k)\in\FF^\ZZ$.  

In the notation of Lemmas~\ref{ForwardTrajectoryLemma} and \ref{BackwardTrajectoryLemma}, we let $n$ and $m$ be the positive integers for which $\delta_{n+1}<r\leq\delta_n$ and $\epsilon_{m+1}<r\leq\epsilon_m$, and we have
\begin{equation*}
\begin{split}
(x_0,y_0)=\omega(\t) & \in V(f^{(t_0t_1t_2\dots)})\cap H(g^{(\dots t_{-3}t_{-2}t_{-1})}) \\
	& \subseteq V_{\delta_n}(f_n^{(t_0t_1t_2\dots)})\cap H_{\epsilon_m}(g_m^{(\dots t_{-3}t_{-2}t_{-1})})
\end{split}
\end{equation*}
and so using Lemma~\ref{LipschitzTubes} {\bf (a)} we have 
\begin{equation*}
\begin{split}
B_r(x_0,y_0)\subseteq V_{\delta_n}(f_n^{(t_0t_1t_2\dots)})\cap H_{\epsilon_m}(g_m^{(\dots t_{-3}t_{-2}t_{-1})}).
\end{split}
\end{equation*}

If a point $\s=(s_k)\in\FF^\ZZ$ satisies $\omega(\s)\in V_{\delta_n}(f_n^{(t_0t_1t_2\dots)})\cap H_{\epsilon_m}(g_m^{(\dots t_{-3}t_{-2}t_{-1})})$, then $(\ref{TrajectorySetsAreTubes})$, $(\ref{HorTrajectorySetsAreTubes})$, and Theorem~\ref{MainThmIntro} imply that $s_k=t_k$ for all $-(m+1)\leq k\leq n$.  It follows from the definition of $\mu_T$ as the pushforward of $\mu_\sigma$ that the $\mu_T$-measure of $V_{\delta_n}(f_n^{(t_0t_1t_2\dots)})\cap H_{\epsilon_m}(g_m^{(\dots t_{-3}t_{-2}t_{-1})})$ is bounded above by $1/q^{m+n+2}$.  

Recall that $r>\delta_{n+1}=1/q^{n+2}$ and $r>\epsilon_{m+1}=|a|^{m+1}/q^{m+2}>(|a|/q)^{m+2}$.  We therefore have $-(m+2)<\frac{\log_qr}{1+\log_q(1/|a|)}$, and so
$$
1/q^{m+2}<r^{\frac{1}{1+\log_q(1/|a|)}}.
$$  
We conclude
\begin{equation*}
\begin{split}
\mu_T(A) & \leq \mu_T(B_r(x_0,y_0)) \\
	& \leq \mu_T(V_{\delta_n}(f_n^{(t_0t_1t_2\dots)})\cap H_{\epsilon_m}(g_m^{(\dots t_{-3}t_{-2}t_{-1})})) \\
	& \leq 1/q^{m+n+2} \\
	& < q^{2}r^{1+\frac{1}{1+\log_q(1/|a|)}}.
\end{split}
\end{equation*}
\end{proof}

\begin{proof}[Proof of Theorem~\ref{DimThmIntro}]
Let $\alpha=1+1/(1+\log_q(1/|a|))$.  The following standard argument (the mass distribution principle) uses Lemma~\ref{BernMeasLem} to show that the Hausdorff dimension of $\Acal_T$ is at least $\alpha$.  Assuming that $\{B_j\}$ is a countable cover of $\Acal_T$, we may assume without loss of generality that each $B_j$ is closed since $\diam(B_j)=\diam(\overline{B}_j)$.  We have
$$
1=\mu_T(\Acal_T)\leq\mu_T(\cup_jB_j)\leq\sum_j\mu_T(B_j)\leq C\sum_j\diam(B_j)^\alpha
$$
and therefore $\Hcal_{\alpha}(A)\geq1/C>0$.  We conclude that $\dim\Acal_T\geq\alpha$.

To prove the opposite inequality, let $n\geq1$ and set $\epsilon_n=|a|^n/q^{n+1}$.  Recall from the proof of Lemma~\ref{BackwardTrajectoryLemma} that $\Acal_T$ is covered by $q^n$ sets of the form $H_{\epsilon_{n}}(g)$.  By Lemma~\ref{LipschitzTubes} {\bf (b)}, each set $H_{\epsilon_{n}}(g)$ is covered by $1/\epsilon_n$ balls of radius $\epsilon_n$.  Thus $\Acal_T$ is covered by $q^n/\epsilon_n$ balls of radius $\epsilon_n$, and so 
\begin{equation*}
\begin{split}
\Hcal_{\alpha,\epsilon_n}(\Acal_T) & \leq (q^n/\epsilon_n)\epsilon_n^\alpha \\
	& =q^{n} (|a|^n/q^{n+1})^{\alpha-1} \\
	& =q^{n+(\alpha-1)(n\log_q|a|-n-1)}  \\
	& =q^{1-\alpha}
\end{split}
\end{equation*}
Letting $n\to+\infty$ we find $\Hcal_{\alpha}(\Acal_T)\leq q^{1-\alpha}<+\infty$ and hence $\dim\Acal_T\leq\alpha$.
\end{proof}

%\bibliographystyle{siam}

%\bibliography{CJP}

\end{document}